\PassOptionsToPackage{usenames,dvipsnames}{xcolor}

\documentclass[final]{siamart220329}

\ifpdf
  \DeclareGraphicsExtensions{.eps,.pdf,.png,.jpg}
\else
  \DeclareGraphicsExtensions{.eps}
\fi


\crefname{figure}{Fig.}{Figs.}
\Crefname{figure}{Fig.}{Figs.}

\usepackage[utf8]{inputenc}
\usepackage[T1]{fontenc}

\newsiamremark{remark}{Remark}
\newsiamremark{example}{Example}

\usepackage{graphicx}
\usepackage{hyphenat}
\usepackage{enumitem}
\usepackage[font={footnotesize,sf}]{caption}
\usepackage{subcaption}
\usepackage{algorithm,algpseudocode}
\usepackage{tikz}
  \usetikzlibrary{shapes.geometric,arrows,spy}
\usepackage{pgfplots}
  \pgfplotsset{compat=1.17}
  \usepgfplotslibrary{groupplots,external}
  \tikzexternalize 
\usepackage{mathtools,amssymb}

\usepackage{nicematrix}
  \NiceMatrixOptions{cell-space-top-limit = 1pt,cell-space-bottom-limit = 1pt}

\DeclarePairedDelimiter\norm{\lVert}{\rVert}
\usepackage{cite,bookmark}

\headers{Legendre-Moment Transform for Control and Computation}
{X. Ning, G. Cheng, W. Zhang, and J.-S. Li}

\title{Legendre-Moment Transform for Linear Ensemble Control and Computation
\thanks{X.~Ning, G.~Cheng, and J.-S.~Li contributed equally to this work.}
}

\author{Xin Ning\thanks{Department of Electrical and Systems Engineering, Washington University in St.\ {}Louis, St.\ {}Louis, MO 63130. (\email{xin.ning@wustl.edu}, \email{wei.zhang@wustl.edu})}
\and Gong Cheng\thanks{School of Mathematical Sciences, Tongji University \& Key Laboratory of Intelligent Computing and Applications (Tongji University), Ministry of Education, Shanghai 200092, China. This work was conducted when this author was with the Department of Electrical and Systems Engineering at Washington University in St.\ {}Louis. (\email{gongch@tongji.edu.cn})}
\and Wei Zhang\footnotemark[2]
\and Jr-Shin Li\thanks{Corresponding author. Department of Electrical and Systems Engineering, Washington University in St.\ {}Louis, St.\ {}Louis, MO 63130.} (\email{jsli@wustl.edu})}

\ifpdf
  \hypersetup{
    pdftitle={A Dynamic Legendre-Moment Approach to Control of Linear Ensemble Systems},
    pdfauthor={X. Ning, G. Cheng, W. Zhang, and J.-S. Li}
  }
\fi

\begin{document}

\maketitle


\begin{abstract}
  Ensemble systems, pervasive in diverse scientific and engineering domains, pose challenges to existing control methods due to their massive scale and underactuated nature. This paper presents a dynamic moment approach to addressing theoretical and computational challenges in systems-theoretic analysis and control design for linear ensemble systems. We introduce the Legendre-moments and Legendre-moment transform, which maps an ensemble system defined on the $L^2$-space to a Legendre-moment system defined on the $\ell^2$-space. We show that this pair of systems is of one-to-one correspondence and shares the same controllability property. This equivalence admits the control of an ensemble system through the control of the corresponding Legendre-moment system and inspires a unified control design scheme for linear ensemble systems using structured truncated moment systems. In particular, we develop a sampling-free ensemble control design algorithm, then conduct error analysis for control design using truncated moment systems and derive error bounds with respect to the truncation orders, which are illustrated with numerical examples.
\end{abstract}

\begin{keywords}
  Ensemble systems, method of moments, computational optimal control, banded matrix
\end{keywords}

\begin{MSCcodes}
  93B05, 93B28, 93B51
\end{MSCcodes}

\section{Introduction}

The task of simultaneous manipulation of a collection of structurally similar systems using a single source of broadcast controllers, known as \emph{ensemble control}, has gained increasing attention across various scientific domains from quantum control \cite{van2017robust,Li2006control,Jiang2012multiple,li2011optimal,chen2014sampling} and neuroscience \cite{Rosenblum04,li2013control,Zlotnik2012optimal,ching_13_control,Kafashan2015optimal,Li_NatureComm16} to robotics \cite{becker2012approximate,mather2014synthesis}. Its prevalence in diverse disciplines has sparked substantial interest in systems science and control in the past decades, while challenges remain persistent in both theoretical and computational aspects. The fundamental bottleneck of controlling ensemble systems arises from the underactuated nature, where in practice broadcast control inputs are  engineered to orchestrate the collective behavior of a sizable population or, in the limit, a continuum of systems as control can be only be applied at the population level. 

Extensive mathematical tools from diverse areas, including algebraic methods, e.g., polynomial approximation \cite{li2016ensemble} and separating points \cite{li2020separating}, statistical approaches, e.g., the method of moments \cite{zeng2016moment,Dirr16}, functional analysis \cite{helmke2014uniform,Dirr2021uniform}, and representation theory \cite{Chen2019structure}, have been proposed to conduct systems-theoretic analysis of ensemble systems. Many of the developed methods, however, focused on analyzing fundamental properties of ensemble systems but were not tailored or suitable for computational control design and numerical analysis for ensemble systems. For example, many existing ensemble control design strategies are sampling-based relying on utilizing a dense sample of systems to represent the entire ensemble. This leads to low computational efficiency with increase of the sampling size and unsatisfactory performance due to ill-posedness issues in numerical computations \cite{Chen11}. Therefore,  developing a unified paradigm to facilitate tractable analysis, computation, and control design for ensemble systems is imperative.

In this paper, we adopt the notion of ``moments'' used in functional analysis for identifying a function using an infinite-sequence and introduce \emph{ensemble moments} associated with an ensemble system defined on a Hilbert space. Specifically, we introduce dynamic  \emph{Legendre-moments} as building blocks to establish a moment-based framework that enables and facilitates systems-theoretic analysis and control design for linear ensemble systems defined on a Hilbert space. We derive the Legendre-moment system associated with an ensemble system and show that this pair of infinite-dimensional systems is of one-to-one correspondence and shares the same controllability property. This equivalence allows us to formulate a unified ensemble control design scheme based on the use of truncated moment systems, by which we develop a \emph{sampling-free} control design algorithm for linear ensemble systems. We observe a banded structure in the system matrix of truncated moment systems. We then exploit this structure to conduct convergence and error analyses of truncated moment systems to the moment system with respect to the truncation order. 

The organization of this paper is as follows. In \Cref{sec:Problem_Transformation}, we introduce 
Legendre\hyp{}moments and Legendre-moment systems associated with ensemble systems. We then establish the dynamic equivalence between linear ensemble systems and their Legendre\hyp{}moment counterparts. \Cref{sec:control.design} details the unified control design scheme for linear ensemble systems based on truncated moment systems, where we provide 
the convergence property relative to the truncation order. In \Cref{sec:error.analysis.and.sample-free}, a thorough error analysis is conducted, leveraging the banded structure of the system matrix governing moment dynamics. We introduce a sampling-free ensemble control design strategy and substantiate our findings through numerical examples, showcasing the efficacy of the proposed moment-based control design approach and validating the performed error analysis.

\section{Moment Dynamics of Linear Ensemble Systems}
\label{sec:Problem_Transformation}

In this paper, we consider the ensemble of linear systems parameterized by $\beta$ taking values on a compact interval $K$, i.e., $\beta\in K\subset\mathbb{R}$, given by
\begin{equation}
  \label{eq:linear_ensemble}
	\frac{\mathrm{d}}{\mathrm{d}t}x(t,\beta) = A(t,\beta)x(t,\beta) + B(t,\beta)u(t),
\end{equation}
where $x(t,\cdot)\in L^2(K,\mathbb{R}^n)$ is the system state, $u\in L^\infty([0,T], \mathbb{R}^m)$ is the control input, and $A(t,\cdot)\in  L^{\infty}(K,\mathbb{R}^{n\times n})$ and $B(t,\cdot)\in L^2(K,\mathbb{R}^{n\times m})$ are the system and control matrices, respectively. Without loss of generality, we take $K=[-1,1]$, as any compact interval in $\mathbb{R}$ is homeomorphic to $[-1,1]$.
 A typical task in ensemble control is to engineer and apply an open-loop, broadcast controller $u(t)$ to navigate the ensemble system as in \cref{eq:linear_ensemble} from an initial profile $x_0(\cdot)$ to the target profile $x_F(\cdot)$ in an approximate sense. This pertains to the controllability property of the entire ensemble.

\begin{definition}[Ensemble Controllability]
  \label{def:ensemble.controllability}
  Let $M\subseteq\mathbb{R}^n$ be a smooth manifold, $K\subset\mathbb{R}$ be a compact set, and $\mathcal{F}(K)$ be the space of $M$-valued functions endowed with a metric $\mathrm{d}(\cdot, \cdot)$. An ensemble of systems defined on a $M$ parameterized by $\beta\in K$, given by
  \begin{equation}
    \label{eq:general.ensemble.system}
    \frac{\mathrm{d}}{\mathrm{d}t}x(t,\beta) = F\bigl(t,\beta,x(t,\beta),u(t)\bigr),\quad x(t,\cdot)\in\mathcal{F}(K),
  \end{equation}
  is \emph{ensemble controllable} on $\mathcal{F}(K)$, if for any $\epsilon>0$ and initial profile of states $x(0,\cdot)=x_0(\cdot)\in\mathcal{F}(K)$, there exists a piecewise constant control input $u(t)$ that steers the system into an $\epsilon$-neighborhood of a target profile $x_F\in\mathcal{F}(K)$ at a finite time $T>0$, i.e., $\mathrm{d}\bigl(x(T,\cdot), x_F(\cdot)\bigr)<\epsilon$. 
\end{definition}

Throughout this paper, we consider $L^2$-ensemble controllability of \cref{eq:linear_ensemble}, in which case $\mathcal{F}(K)=L^2(K,\mathbb{R}^n)$ equipped with the metric induced by the standard $L^2$-norm. Since $L^2(K,\mathbb{R}^n)$ is a separable Hilbert space, any ensemble state has the form of an infinite sum $x(t,\beta)=\sum m_{k}(t)\phi_{k}(\beta)$ for a given countable basis $\{\phi_k(\beta)\}$. Leveraging this representation, we show that, by choosing a proper orthonormal basis of $L^2(K,\mathbb{R}^n)$, the linear ensemble in~\cref{eq:linear_ensemble} can be transformed into an infinite-dimensional system represented by ``moments''.

\subsection{Moment Dynamics under the Legendre Basis}
\label{subsec:Moment_using_Legendre}

In this section, we consider the \emph{generalized moments} with respect to the \emph{normalized} Legendre polynomials defined by
\begin{equation}
	\label{eq:Legendre_moments}
	m_k(t)=\int_{K} P_k(\beta)x(t,\beta)\,\mathrm{d}\beta,
\end{equation}
and we call $m_k(t)$ the $k$-th Legendre moment of the ensemble system $x(t,\beta)$. Because the $K$ is bounded and $x(t,\cdot)$ is differentiable in $t$, the moments $m_k(t)$ are differentiable in $t$. The normalized Legendre polynomials $P_0(\beta)=\tfrac{1}{\sqrt{2}}$, $P_1(\beta)=\sqrt{\frac{3}{2}}\beta$, \dots{} in \cref{eq:Legendre_moments} satisfy the following orthonormality and recurrence relations: for any $i,j\in\mathbb{N}$ and $k\in\mathbb{Z}_{+}$,
\begin{align}
  \langle P_i, P_j\rangle &= \int_K P_i(\beta) P_j(\beta)\,\mathrm{d}\beta = \delta_{ij}, \label{eq:orthonormality} \\
  c_{k} P_{k+1}(\beta) &= \beta P_k(\beta) - c_{k-1} P_{k-1}(\beta), \label{eq:recurrence}
\end{align}
where $c_k=\tfrac{k+1}{\sqrt{(2k+1)(2k+3)}}$. 
Since the set $\{P_k\}$ constitutes an orthonormal basis of $L^2(K,\mathbb{R})$, by \cref{eq:Legendre_moments} each state variable $x(t, \cdot) = \bigl(x_1(t,\cdot), \ldots, x_n(t,\cdot)\bigr)' \in L^2(K,\mathbb{R}^n)$ can be represented as a Fourier-Legendre series: 
\begin{align}
  \label{eq:Legendre.expansion}
  x_i(t,\cdot)= \sum_{k=0}^{\infty} m_{ik}(t)P_k(\cdot),
\end{align}
where $m_{ik}(t)$ denotes the $i$\textsuperscript{th} element of $m_k(t) = (m_{1k}(t),\ldots,m_{nk}(t))^\prime$.
By the Parseval's identity, the infinite sequence (considered as an infinite \emph{column} vector throughout the paper), $m(t)\doteq{}(m'_0(t), m'_1(t), \ldots)'$ 
is square summable, for all $t\geqslant 0$. The Fourier-Legendre series~\cref{eq:Legendre.expansion} then induces an isometric isomorphism between $x(t,\cdot)\in L^2(K,\mathbb{R}^n)$ and $m(t)\in \ell^2(\mathbb{R}^n)$. Through this isomorphism, we can transform the linear ensemble~\cref{eq:linear_ensemble} in $L^2$ to the associated \emph{moment system}, which is an infinite\hyp{dimensional} linear system in $\ell^2(\mathbb{R}^n)$, so that ensemble controllability of the original linear ensemble is equivalent to approximate controllability of the moment system. To show this equivalence, below we provide a ``prototype'' example that demonstrates the correspondence between the ensemble in $L^2$ and its moment system in $\ell^2$.

\begin{example}
  \label{ex:prototype}
  Consider a linear ensemble system defined on $L^2([-1,1],\mathbb{R}^n)$, in which $A(t,\beta)$ is linear in $\beta$ and $B(t,\beta)$ is constant, given by
  \begin{equation}
    \label{eq:prototype}
   \frac{\mathrm{d}}{\mathrm{d}t}x(t,\beta) =\beta Ax(t,\beta)+Bu.
  \end{equation}
  Using the Fourier-Legendre series in \cref{eq:Legendre.expansion} and the recurrence relation in \cref{eq:recurrence}, we obtain the moment dynamics associated with the ensemble system in \cref{eq:linear_ensemble}, for any $k\in\mathbb{N}$,
  \begin{equation}
    \label{eq:moment.dynamics}
    \dot{m}_k(t) = c_{k-1}Am_{k-1}(t)+c_{k}Am_{k+1}(t)+\sqrt{2}\delta_{0k} Bu(t),
  \end{equation}
  where $m_{-1}$ and $c_{-1}$ are treated as $0$, and $\delta_{0k}$ is the Kronecker delta function. Since the coefficients $\{c_k\}$ are bounded (by $1/\sqrt{3}$), from \cref{eq:moment.dynamics} we see that the infinite sequence $\dot{m}(t)=(\dot{m}_0(t), \dot{m}_1(t), \ldots)$ is square summable, i.e., $\dot{m}(t)\in \ell^2(\mathbb{R}^n)$, for all $t\geqslant0$. So if we write $m(t)$ and $\dot{m}(t)$ as (infinite) column vectors, then \cref{eq:moment.dynamics} becomes a linear system in $\ell{}^2(\mathbb{R}^n)$ of the form,
  \begin{equation}
    \label{eq:moment.dynamics.prototype.in.example}
    \dot{m}(t) = \hat{A}m(t)+\hat{B}u(t),
  \end{equation}
  where $\hat{A}$ and $\hat{B}$ have the following matrix forms,
  \[
    \hat{A}=\begin{pmatrix}
      0 & \frac{1}{\sqrt{3}}  & 0  & \\
      \frac{1}{\sqrt{3}} & 0 & \frac{2}{\sqrt{15}} & \\
      0 & \frac{2}{\sqrt{15}} & 0 & \ddots \\
      & & \ddots & \ddots &
    \end{pmatrix} \otimes A \ \text{ and }\ %
    \hat{B}=\begin{pmatrix} \sqrt{2} \\ 0 \\ 0 \\ \vdots \end{pmatrix} \otimes B.
  \]
  where $\otimes$ denotes the Kronecker product of matrices.
\end{example}

We note that both $\hat{A}$ and $\hat{B}$ are bounded operators, and obviously $\hat{B}$ is of finite rank. Also, for any $m = (m_0,m_1,\ldots)'\in\ell^2(\mathbb{R}^n)$, we have by \cref{eq:moment.dynamics.prototype.in.example} that
\begin{multline}
  \label{eq:A.hat.boundedness.prototype}
  \norm[\big]{\hat{A}m}_{\ell^2} =
  \norm[\big]{\begin{pNiceMatrix} 0 \\ \frac{1}{\sqrt{3}} Am_0 \\ \frac{ 2}{\sqrt{15}} Am_1 \\ \vdots \end{pNiceMatrix} +
  \begin{pNiceMatrix} \frac{1}{\sqrt{3}}Am_1 \\ \frac{2}{\sqrt{15}}Am_2 \\ \frac{3}{\sqrt{35}}Am_3 \\ \vdots \end{pNiceMatrix}}_{\ell^2} \\
  \leqslant \norm{\sharp(\{Am_i\})}_{\ell^2} + \norm{\flat(\{Am_i\})}_{\ell^2} \leqslant 2 \norm{A}_{2}\cdot \norm{m}_{\ell^2},
\end{multline}
where $\norm{A}_2$ is the matrix 2-norm of $A$, and $\sharp(\cdot)$ and $\flat(\cdot)$ denote the forward and backward shift operators on $\ell^2(\mathbb{R}^n)$, respectively (see Appendix). Therefore, we conclude that the linear ensemble~\cref{eq:prototype} in $L^2(K,\mathbb{R}^n)$ induces the moment system in \cref{eq:moment.dynamics.prototype.in.example}, which is well-defined and evolves in $\ell^2(\mathbb{R}^n)$. 

\begin{remark}
  \label{rem:long.remark}
  (i) Notice that the ensemble system in \cref{eq:linear_ensemble} is an infinite-dimensional time-varying linear system. Its moment system is also a time-varying linear system due to the linearity of the moment transformation defined in \cref{eq:Legendre_moments}. 
  
  (ii) The matrix $\hat{A}$ in \cref{eq:moment.dynamics.prototype.in.example} belongs to a class of operators known as the \emph{banded} matrix, whose definition and properties are summarized in the appendix. The band structure plays a central role in both the development of the infinite\hyp{}dimensional moment system in $\ell^2(\mathbb{R}^n)$ and its applications.

  (iii) Since the subsystem $x(t,\beta)$ of the ensemble for each $\beta$ evolves in $\mathbb{R}^n$, we use the notation $\ell^2(\mathbb{R}^n)$ to denote the space consisting of square\hyp{}summable sequences of $\mathbb{R}^n$\hyp{}vectors. It is trivially isometrically isomorphic to $\ell^2$, the Hilbert space of square\hyp{}summable scalar sequences. The only difference between the two is that the forward/backward shift operator in $\ell^2(\mathbb{R}^n)$ shifts a corresponding $\ell^2$\hyp{}sequence $n$ times, as we see in \cref{eq:A.hat.boundedness.prototype}.
\end{remark}

\subsection{Duality of Linear Ensemble Systems} \label{subsec:equivalence.of.the.two.systems}

The introduced Legendre moment transform maps a linear ensemble system to a linear control system described by Legendre moments. This inspires the examination of equivalence between this pair of infinite-dimensional linear systems. In this section, we reveal duality between a linear ensemble and its Legendre-moment system in terms of their systems properties, e.g., controllability. 

The moment system in \cref{eq:moment.dynamics.prototype.in.example} is an infinite-dimensional linear system in the Hilbert space $\ell^2(\mathbb{R}^n)$. To reveal its controllability resemblance to the linear ensemble in \cref{eq:prototype}, we define the \emph{approximate controllability} for linear systems in a Banach space as follows.

\begin{definition}[Approximate Controllability~\cite{triggiani1975controllability}]
  \label{def:approximate_controllability}
  Consider a linear system evolving on the Banach space $\mathcal{X}$ of the form $ \dot{m}(t)= \hat{A}m(t)+\hat{B}u(t)$, where $\hat{A}\in \mathcal{B}(\mathcal{X})$ and $\hat{B}\in\mathcal{B}(\mathbb{R}^m,\mathcal{X})$ are bounded operators, and the control input $u(t)\in L^\infty([0,T],\mathbb{R}^m)$ is bounded. We call this system \emph{approximately controllable} on interval $[0,T]$ if given any $\epsilon>0$ and two arbitrary initial and final points $x_0$ and $x_F$,respectively, in $\mathcal{X}$, there is an admissible $u(t)$ on $[0, T]$ steering $x_0$ to an $\epsilon$-neighborhood of $x_F$.
\end{definition}

Given the correspondence between the linear ensemble and the moment system presented in \cref{ex:prototype}, using the Parseval's identity, we build the following equivalence of controllability for the two systems.

\begin{theorem}
  \label{prop:controllability.equiv.linear.case}
  The linear ensemble in \cref{eq:prototype} is $L^2$-ensemble controllable if and only if its associated Legendre-moment system in \cref{eq:moment.dynamics.prototype.in.example} is approximately controllable in $\ell^2(\mathbb{R}^n)$.
\end{theorem}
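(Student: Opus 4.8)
The plan is to realize the Fourier--Legendre expansion \cref{eq:Legendre.expansion} as a single unitary map that simultaneously conjugates the two flows, so that controllability transfers essentially verbatim. Concretely, I would introduce $\Phi\colon L^2(K,\mathbb{R}^n)\to\ell^2(\mathbb{R}^n)$ by $\Phi\bigl(x(\cdot)\bigr)=\bigl(m_k\bigr)_{k\geqslant0}$ with $m_k=\int_K P_k(\beta)x(\beta)\,\mathrm{d}\beta$. By the orthonormality \cref{eq:orthonormality} and Parseval's identity, $\Phi$ is a linear surjective isometry, hence a bijection with isometric inverse $\Phi^{-1}\bigl((m_k)\bigr)=\sum_k m_kP_k$. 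Two consequences are recorded at once: $\Phi$ carries the open $\epsilon$-ball about any $x_F$ onto the open $\epsilon$-ball about $\Phi(x_F)$, and every pair of points in $\ell^2(\mathbb{R}^n)$ arises as the $\Phi$-image of a pair of profiles in $L^2(K,\mathbb{R}^n)$.

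The core step is the trajectory correspondence: a curve $t\mapsto x(t,\cdot)$ solves \cref{eq:prototype} with input $u$ \emph{if and only if} $t\mapsto m(t)=\Phi\bigl(x(t,\cdot)\bigr)$ solves \cref{eq:moment.dynamics.prototype.in.example} with the \emph{same} $u$. One direction is exactly the computation already displayed in \cref{ex:prototype}: since $x(t,\cdot)$ is differentiable in $t$ in $L^2$ and $P_k\in L^2$, one has $\dot m_k(t)=\langle P_k,\partial_t x(t,\cdot)\rangle$; substituting \cref{eq:prototype} and applying the three-term recurrence \cref{eq:recurrence} to rewrite $\langle\beta P_k,x\rangle=c_{k-1}m_{k-1}+c_km_{k+1}$ yields \cref{eq:moment.dynamics.prototype.in.example}. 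For the converse, given the (unique, since $\hat A$ is bounded by the estimate around \cref{eq:A.hat.boundedness.prototype} and $\hat B$ is bounded of finite rank) solution $m(\cdot)\in C^1([0,T],\ell^2(\mathbb{R}^n))$, set $x(t,\cdot)=\Phi^{-1}(m(t))$; boundedness of $\hat A$ legitimizes termwise differentiation of the series in $L^2$ and the same algebra run in reverse recovers \cref{eq:prototype}. Hence the input-to-state maps of the two systems are intertwined by $\Phi$, and in particular $x(T,\cdot)\in B_\epsilon(x_F)$ iff $m(T)\in B_\epsilon(\Phi(x_F))$.

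From here the equivalence is a short argument, modulo one bookkeeping item: \cref{def:ensemble.controllability} admits only piecewise constant inputs, whereas \cref{def:approximate_controllability} admits all of $L^\infty([0,T],\mathbb{R}^m)$. If the moment system is approximately controllable, pick, for given $x_0,x_F$ and $\epsilon>0$, an $L^\infty$ input steering $\Phi(x_0)$ into $B_{\epsilon/2}(\Phi(x_F))$; because the map $u\mapsto m(T)$ is continuous from $L^1([0,T],\mathbb{R}^m)$ to $\ell^2(\mathbb{R}^n)$ (variation-of-constants formula with bounded $\hat A,\hat B$) and piecewise constant inputs are $L^1$-dense, replace it by a piecewise constant $\tilde u$ still reaching $B_\epsilon(\Phi(x_F))$; by the previous paragraph the same $\tilde u$ drives $x_0$ into $B_\epsilon(x_F)$ in $L^2(K,\mathbb{R}^n)$, and surjectivity of $\Phi$ shows every $x_0,x_F$ is covered, so \cref{eq:prototype} is $L^2$-ensemble controllable. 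Conversely, a piecewise constant input is in particular admissible in \cref{def:approximate_controllability}, and the trajectory correspondence transports approximate reachability in $L^2(K,\mathbb{R}^n)$ to $\ell^2(\mathbb{R}^n)$, again for every target by surjectivity of $\Phi$.

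I expect the genuinely delicate point to be this last reconciliation of admissible control classes---the density-plus-continuity step---together with the justification of termwise differentiation of the Fourier--Legendre series in the converse half of the trajectory correspondence, which is precisely where the boundedness of the banded operator $\hat A$ (cf.\ \cref{eq:A.hat.boundedness.prototype}) is used. Everything else is a direct transcription through the isometry $\Phi$.
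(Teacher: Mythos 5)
Your proposal is correct and follows essentially the same route as the paper: establish the trajectory correspondence through the Fourier--Legendre isometry (the forward computation of \cref{ex:prototype} and its reversal), then transfer $\epsilon$-reachability via Parseval's identity \cref{eq:isometry}. The one place you go beyond the paper is the density-plus-continuity argument reconciling piecewise constant inputs (\cref{def:ensemble.controllability}) with $L^\infty$ inputs (\cref{def:approximate_controllability}), a mismatch the paper's proof silently elides by asserting ``the same control input'' works in both directions; your treatment of it is a genuine, if minor, tightening.
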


\begin{proof}
  (Sufficiency): We have shown in the previous example that if the state $x(t,\cdot)$ in $L^2(K,\mathbb{R}^n)$ satisfies \cref{eq:prototype}, then its moments must satisfy the moment dynamics~\cref{eq:moment.dynamics} (and, equivalently, \cref{eq:moment.dynamics.prototype.in.example}). On the other hand, if an infinite sequence $m(t)=(m_0(t), m_1(t), \ldots)$ in $\ell^2(\mathbb{R}^n)$ satisfies the dynamics in \cref{eq:moment.dynamics}, then it corresponds to an $L^2$-function $x(t,\cdot)=\sum_{k=0}^{\infty} m_k(t)P_k(\cdot)$ which satisfies \cref{eq:prototype}:
  \begingroup
  \allowdisplaybreaks
  \begin{align*}
    \dot{x}(t,\beta) &=\sum_{k=0}^\infty \dot{m}_k(t)P_k(\beta) =\sum_{k=0}^{\infty} \bigl[c_{k-1}Am_{k-1}(t)P_k(\beta) \\ &\qquad\qquad\qquad\qquad\qquad\quad +c_kAm_{k+1}(t)P_k(\beta)\bigr]+\sqrt{2}Bu(t)P_0(\beta) \\
    &=\sum_{k=0}^\infty \bigl[c_kP_{k+1}(\beta) + c_{k-1}(t)P_{k-1}(\beta)\bigr]Am_k(t)  +Bu(t) \\
    &=\sum_{k=0}^{\infty} \beta Am_k(t)P_k(\beta) +Bu(t) = \beta Ax(t,\beta)+Bu.
  \end{align*}
  \endgroup
  
  Now let us suppose the linear ensemble in \cref{eq:prototype} is $L^2$\hyp{}ensemble controllable, then for any $x_F\in L^2(K,\mathbb{R}^n)$, we can find an admissible control input $u(t)$ that steers $x(t,\cdot)$ to an $\epsilon$-neighborhood of $x_F$. By the isometry between $L^2$ and $\ell^2$:
  \begin{equation}
    \label{eq:isometry}
      \norm{x(T,\cdot)-x_F}_{L^2}=\norm{m(T)-m_F}_{\ell^2},
  \end{equation}
  the same control input $u(t)$ steers $m(t)$ to an $\epsilon$\hyp{}neighborhood of $m_F$ within the same time frame. In other words, the \emph{reachable set} of \cref{eq:moment.dynamics.prototype.in.example} is dense in $\ell^2$, which shows approximate controllability. This concludes the sufficiency part of the proof. 
  
  (Necessity:) This is obvious by reversing the process above.
\end{proof}

In light of the moment dynamics in \cref{ex:prototype}, we can generalize the correspondence between linear ensembles in $L^2(K,\mathbb{R}^n)$ and moment systems in $\ell^2(\mathbb{R}^n)$ to a broader class of ensemble systems. In particular, we extend this property to the ensemble system governed by vector fields in polynomial forms, i.e., $A(t,\beta)$ and $B(t,\beta)$ are polynomials in the system parameter $\beta$. 

\begin{proposition}
  \label{thm:moment.dynamics.for.polynomial.in.beta}
  Consider the linear ensemble system in \cref{eq:linear_ensemble} with the matrices $A(t,\beta)$ and $B(t,\beta)$ being polynomials in $\beta$, i.e.,
  \begin{equation}
    \label{eq:A.and.B.polynomials.in.beta}
    A(t,\beta)=\sum_{i=0}^{N_1} P_i(\beta)A_i(t), \quad
    B(t,\beta)=\sum_{j=0}^{N_2} P_j(\beta)B_j(t),
  \end{equation}
  then its moment system is linear and is of the form
  \begin{equation}
    \label{eq:moment.system.banded.system.matrix}
    \dot{m}(t) = \hat{A}(t)m(t) + \hat{B}(t)u(t),
  \end{equation}
  where $\hat{A}$ is an infinite $2nN_1$\hyp{}banded matrix as in \cref{def:banded.matrix.and.bandwidth} in \cref{appendix:infinite.banded.matrix} and $\hat{B}$ is of rank $nN_2$, i.e.
  \begin{subequations}
    \label{eq:A.hat.and.B.hat}
    \begin{flalign}
      \hat{A}(t) &=
      \begin{pNiceArray}{cccccc}
        A_{00}(t) & \cdots & A_{0{N_1}}(t) & & \Block{2-2}<\large>{\mathbf{0}} & \\
        \vdots & \Block{3-3}<\large>{\ddots} & & \ddots & & \\
        A_{{N_1}0}(t) & & & & & \\
        \Block{2-2}<\large>{\mathbf{0}} & \ddots & & & & \\
        & & & & &
      \end{pNiceArray}, \label{eq:polynomials.in.beta.A.hat} \\
      \hat{B} &=\begin{pmatrix} B_{0}(t)' & B_{1}(t)' & \ldots & B_{N_2}(t)' & 0  & \ldots \end{pmatrix}'. \label{eq:polynomials.in.beta.B.hat}
    \end{flalign}
  \end{subequations}
\end{proposition}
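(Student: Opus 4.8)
The plan is to compute the moment dynamics directly from the defining relation \cref{eq:Legendre_moments}, exactly as was done in \Cref{ex:prototype}, and then read off the structure of $\hat A(t)$ and $\hat B(t)$ term by term. Since $K$ is bounded and $x(t,\cdot)$ is differentiable in $t$, differentiation commutes with the moment integral, so $\dot m_k(t)=\int_K P_k(\beta)\dot x(t,\beta)\,\mathrm{d}\beta=\int_K P_k(\beta)A(t,\beta)x(t,\beta)\,\mathrm{d}\beta+\bigl(\int_K P_k(\beta)B(t,\beta)\,\mathrm{d}\beta\bigr)u(t)$. Linearity in $m$ and $u$ is then immediate, and the proposition follows once the two integrals are analyzed separately.

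For the control term, substitute $B(t,\beta)=\sum_{j=0}^{N_2}P_j(\beta)B_j(t)$ and invoke the orthonormality relation \cref{eq:orthonormality}: $\int_K P_k(\beta)B(t,\beta)\,\mathrm{d}\beta=\sum_{j=0}^{N_2}\langle P_k,P_j\rangle B_j(t)$, which equals $B_k(t)$ for $k\le N_2$ and vanishes for $k>N_2$. Stacking over $k$ yields $\hat B(t)=\bigl(B_0(t)'\ \cdots\ B_{N_2}(t)'\ 0\ \cdots\bigr)'$ as in \cref{eq:polynomials.in.beta.B.hat}; its range is contained in the finitely many leading block coordinates of $\ell^2(\mathbb{R}^n)$, so $\hat B(t)$ is of finite rank as claimed.

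The heart of the proof is the system term. Writing $x(t,\cdot)=\sum_{l\geqslant0}m_l(t)P_l(\cdot)$ (as in \cref{eq:Legendre.expansion}) and $A(t,\beta)=\sum_{i=0}^{N_1}P_i(\beta)A_i(t)$, one gets $\int_K P_k(\beta)A(t,\beta)x(t,\beta)\,\mathrm{d}\beta=\sum_{l\geqslant0}A_{kl}(t)m_l(t)$ with block entries $A_{kl}(t)=\sum_{i=0}^{N_1}\langle P_kP_i,P_l\rangle A_i(t)$, where $\langle P_kP_i,P_l\rangle=\int_K P_kP_iP_l\,\mathrm{d}\beta$. The key observation is that this triple product vanishes whenever one index exceeds the sum of the other two, since then that Legendre polynomial is orthogonal to the product of the other two, which is a polynomial of strictly smaller degree; hence $A_{kl}(t)\neq0$ forces $|k-l|\leqslant i\leqslant N_1$ for some admissible $i$, i.e.\ $|k-l|\leqslant N_1$, so $\hat A(t)$ is $N_1$-banded at the block level, which is the $2nN_1$-bandedness of \cref{eq:polynomials.in.beta.A.hat} in the scalar sense of \cref{def:banded.matrix.and.bandwidth} (each block being $n\times n$). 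The route I would actually spell out, since it directly uses the tools already set up, is equivalent: multiplication by $\beta$ on $L^2(K,\mathbb{R})$ is represented in the Legendre basis by the symmetric tridiagonal Jacobi matrix dictated by the recurrence \cref{eq:recurrence}, so multiplication by the degree-$i$ polynomial $P_i(\beta)$ is a polynomial of degree $i$ in that tridiagonal matrix and is therefore $i$-banded, and summing $\sum_{i=0}^{N_1}P_i(\beta)A_i(t)$ produces an $N_1$-block-banded operator by the banded-matrix algebra of \cref{appendix:infinite.banded.matrix}. Boundedness of $\hat A(t)$ then comes for free: since $A(t,\cdot)\in L^\infty(K,\mathbb{R}^{n\times n})$, the map $x(t,\cdot)\mapsto A(t,\cdot)x(t,\cdot)$ is bounded on $L^2(K,\mathbb{R}^n)$, and $\hat A(t)$ is precisely its matrix representation under the isometric isomorphism $L^2(K,\mathbb{R}^n)\cong\ell^2(\mathbb{R}^n)$, so \cref{eq:moment.system.banded.system.matrix} is a well-posed linear system in $\ell^2(\mathbb{R}^n)$.

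The only genuine obstacle is the support statement for the triple products $\langle P_kP_i,P_l\rangle$, which pins down the bandwidth; the remaining points — legitimacy of differentiating the moment integral and of termwise differentiation of the Fourier--Legendre series, and boundedness of $\hat A(t)$ — are routine and follow respectively from boundedness of $K$, the orthonormality and recurrence relations, and the $L^\infty$-boundedness of $A(t,\cdot)$. In effect the whole argument is a careful bookkeeping extension of \Cref{ex:prototype} from the single-term case $A(t,\beta)=\beta A$ to an arbitrary polynomial combination of the $P_i$.
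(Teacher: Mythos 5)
Your proposal is correct, and it reaches the same structural conclusions as the paper while taking a genuinely different route at the two technical steps. The opening move is identical: differentiate the moment integral, dispatch the control term by orthonormality to get \cref{eq:polynomials.in.beta.B.hat}. For the bandedness of $\hat A(t)$, the paper invokes the explicit linearization formula $P_kP_i=\sum_{r=0}^{i}d_{kir}P_{k+i-2r}$ of al-Salam, which yields closed-form expressions for the block entries $A_{kl}(t)$ in terms of the coefficients $d_{kir}$; you instead observe that $\langle P_kP_i,P_l\rangle$ vanishes unless $|k-l|\leqslant i$ by a pure degree-counting/orthogonality argument (or, equivalently, via the Jacobi-matrix functional calculus for multiplication by $P_i(\beta)$). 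Your argument is more elementary and would transfer verbatim to any orthogonal polynomial basis, but it forfeits the explicit entry formulas the paper records. The difference is sharper at the boundedness step: the paper proves uniform boundedness of the $d_{kir}$ via asymptotics of Gamma-function ratios and then appeals to \cref{lem:bounded.banded.matrix}, whereas you note that $\hat A(t)$ is the matrix of the multiplication operator $x\mapsto A(t,\cdot)x$ under the isometric isomorphism $L^2(K,\mathbb{R}^n)\cong\ell^2(\mathbb{R}^n)$, so $\|\hat A(t)\|\leqslant\|A(t,\cdot)\|_{L^\infty}$ immediately. This is cleaner and gives a sharper operator-norm bound, though it does not by itself establish the uniform entrywise bound that the paper later reuses (e.g.\ through $\|\hat A\|_{\max}$ in the truncation error analysis of \Cref{sec:error.analysis.and.sample-free}); if you wanted that as well, you would recover it from $|A_{kl}(t)|\leqslant\|\hat A(t)\|$ for matrix entries of a bounded operator in an orthonormal basis. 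Both proofs share the same minor bookkeeping looseness in converting block bandwidth $N_1$ into the scalar bandwidth $2nN_1$, so no gap is introduced relative to the paper.
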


\begin{proof}
  By differentiating \cref{eq:Legendre_moments}, we obtain
  \begingroup
  \allowdisplaybreaks
  \begin{align}
    \dot{m}_k(t) &= \int_{K} P_k(\beta)\dot{x}(t,\beta)\,\mathrm{d}\beta = \int_{K} P_k(\beta)\bigl[A(t,\beta)x(t,\beta)+B(t,\beta)u(t)\bigr] \,\mathrm{d}\beta \nonumber \\
    &= \sum_{i=0}^{N_1} A_i(t) \int_{K} P_k(\beta)P_{i}(\beta)x(t,\beta)\,\mathrm{d}\beta  +\sum_{j=0}^{N_2} B_j(t)u(t) \int_{K} P_k(\beta)P_{j}(\beta)\,\mathrm{d}\beta, \nonumber \\
    \begin{split}
      &= \sum_{i=0}^{N_1} A_i(t) \int_{K} P_k(\beta)P_{i}(\beta)x(t,\beta)\,\mathrm{d}\beta + B_k(t)u(t),
    \end{split}
    \label{eq:m_k.derivative}
  \end{align}
  \endgroup
  where $h_k\doteq{}0$ for $k>N_2$. It is now evident that \cref{eq:polynomials.in.beta.B.hat} holds for $\hat{B}$, in which only the first $N_2$ entry blocks are nonzero. To show that $\hat{A}$ has the band structure in \cref{eq:polynomials.in.beta.A.hat}, we note that the product of two Legendre polynomials can be expressed as a linear combination \cite{al-salam1957product}
  \begin{equation}
    \label{eq:Legendre.linear.combination}
    P_k(\beta)P_i(\beta)=\mathop{\smash{\sum_{r=0}^{i}}} d_{kir}P_{k+i-2r}(\beta),
  \end{equation}
  and the coefficients $d_{kir}$ have the following form
  \[
    d_{kir}=\frac{\sqrt{(2k+2i-4r+1)(2k+1)(2i+1)}}{\sqrt{2}(2k+2i-2r+1)}\cdot \frac{\mathcal{G}_r \mathcal{G}_{k-r} \mathcal{G}_{i-r}}{\mathcal{G}_{k+i-r}},
  \]
  where
  \[
    \mathcal{G}_{s}\doteq{}\frac{1}{2} \Bigl(1+\frac{1}{2}\Bigr) \cdots \Bigl(s-1+\frac{1}{2}\Bigr)\big/s!=\frac{\Gamma(s+\frac{1}{2})}{\Gamma(s+1)\Gamma(\frac{1}{2})}
  \]
  for $s\in \mathbb{N}$. So we can write the sum in \cref{eq:m_k.derivative} as
  \begingroup
  \allowdisplaybreaks
  \begin{align}
    \sum_{i=0}^{N_1} A_i(t)\int_{K} P_k(\beta)P_{i}(\beta)x(t,\beta)\,\mathrm{d}\beta &= \sum_{i=0}^{N_1} A_i(t)\int_{K} \sum_{r=0}^{i} d_{kir}P_{k+i-2r}(\beta)x(t,\beta)\,\mathrm{d}\beta \nonumber \\
    &= \sum_{i=0}^{N_1}\sum_{r=0}^{i} d_{kir} A_i(t) m_{k+i-2r}(t), \label{eq:A.hat.entry.coefficients}
  \end{align}
  \endgroup
  where $d_{kir}$ and $m_{k+i-2r}$ with negative indices are treated as $0$. It is clear the sum in \cref{eq:A.hat.entry.coefficients} is a linear combination of the moments: \(m_{k-N_1}, m_{k-N_1+1}, \ldots, m_k, \ldots, m_{k+N_1}\), so when represented as an (infinite) matrix, $\hat{A}$ is banded with a bandwidth of $2nN_1$, and each of its entry blocks has the closed form: $A_{kl}(t) = \sum_{i=|k-l|}^{N_1} d_{ki\lfloor\tfrac{i+k-l}{2}\rfloor} A_i(t)$.
  
  Next, we show that $\hat{A}$ and $\hat{B}$ are bounded operators. This is obvious for $\hat{B}$, since we know from \cref{eq:m_k.derivative} that $\norm{\hat{B}}^2\leqslant \sum_{i=0}^{N_2}\norm{B_i}^2$.
  To show that $\hat{A}$ is also bounded, by \cref{lem:bounded.banded.matrix} in the appendix, it suffices to prove that the entries of $\hat{A}$ are bounded. To see this, we first observe that, for all $i,r\in [0:N_1]\doteq{}\{0,1,2,\ldots, N_1\}$,
  \[
    \lim_{k\to \infty} \frac{\mathcal{G}_{k-r}}{\mathcal{G}_{k+i-r}}=\lim_{k\to \infty}\frac{\Gamma(k-r+\frac{1}{2})}{\Gamma(k-r+1)}\cdot \frac{\Gamma(k-r+i+1)}{\Gamma(k-r+i+\frac{1}{2})}=1.
  \]
  Also note that $0<\mathcal{G}_{r}, \mathcal{G}_{i-r}\leqslant 1$, then this gives
  \[
    \lim_{k\to\infty} d_{kir}\leqslant \sqrt{N_1+1}.
  \]
  In other words, $d_{kir}$ are uniformly bounded for all $k\in \mathbb{N}$ and $i,r\in[0:N_1]$. It then follows immediately from \cref{eq:A.hat.entry.coefficients} that the coefficient of $m_{k+s}$, where $s\in[-N_1:N_1]$, are uniformly bounded, for all $k$ and $s$. Equivalently, the entries of $\hat{A}$ are bounded, which concludes our proof.
\end{proof}

\Cref{thm:moment.dynamics.for.polynomial.in.beta} extends the scope of our analysis, so that the correspondence between the linear ensemble (consisting of uncountably-many subsystems) and the moment system (countably-many moment terms) is applied to linear ensembles with polynomial coefficients. Following exactly the proof of \cref{prop:controllability.equiv.linear.case}, we conclude that ensemble controllability of a time-invariant linear ensemble with $A$ and $B$ in the form of \cref{eq:A.and.B.polynomials.in.beta} is equivalent to approximate controllability of the infinite\hyp{}dimensional system~\cref{eq:moment.dynamics.prototype.in.example}, where $\hat{A}$ and $\hat{B}$ take the form in \cref{eq:A.hat.and.B.hat}. This result is summarized in the following theorem. 

\begin{theorem}
  \label{thm:controllability.equiv.polynomial.case}
  The linear ensemble in \cref{eq:linear_ensemble} with $A(\beta)$ and $B(\beta)$ being the polynomials in \cref{eq:A.and.B.polynomials.in.beta} is $L^2$-ensemble controllable if and only if its associated Legendre moment system in \cref{eq:moment.dynamics.prototype.in.example} is approximately controllable in $\ell^2(\mathbb{R}^n)$.
\end{theorem}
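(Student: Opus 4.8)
The plan is to mirror, essentially verbatim, the proof of \Cref{prop:controllability.equiv.linear.case}, with \Cref{thm:moment.dynamics.for.polynomial.in.beta} supplying the correspondence between the two systems in place of the ad hoc computation of \Cref{ex:prototype}. By \Cref{thm:moment.dynamics.for.polynomial.in.beta}, whenever $x(t,\cdot)\in L^2(K,\mathbb{R}^n)$ solves \cref{eq:linear_ensemble} with $A,B$ as in \cref{eq:A.and.B.polynomials.in.beta}, its Legendre moments $m(t)$ solve the well-posed $\ell^2(\mathbb{R}^n)$-system \cref{eq:moment.system.banded.system.matrix}--\cref{eq:A.hat.and.B.hat}, with $\hat A$ a bounded $2nN_1$\hyp{}banded operator and $\hat B$ bounded of rank $nN_2$; thus the moment transform, an isometric isomorphism, carries solutions of the ensemble system to solutions of the moment system in one direction.

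The one step not already contained in \Cref{thm:moment.dynamics.for.polynomial.in.beta} is the converse reconstruction: if $m(t)=(m_0(t)',m_1(t)',\ldots)'\in\ell^2(\mathbb{R}^n)$ solves \cref{eq:moment.system.banded.system.matrix}, then $x(t,\cdot)\doteq\sum_{k\geqslant 0} m_k(t)P_k(\cdot)$ solves \cref{eq:linear_ensemble}. I would argue this exactly as in the displayed computation in the proof of \Cref{prop:controllability.equiv.linear.case}: differentiate the Fourier--Legendre series term by term --- justified because $\dot m(t)\in\ell^2(\mathbb{R}^n)$ with norm bounded uniformly on $[0,T]$ (boundedness of $\hat A,\hat B$), so the differentiated series converges in $L^2$ and interchanging $\frac{\mathrm d}{\mathrm dt}$ with $\sum_k$ is legitimate --- then substitute $\dot m_k = \sum_{i=0}^{N_1}\sum_{r=0}^{i} d_{kir}A_i(t) m_{k+i-2r} + B_k(t) u$, reindex the double sum (the bookkeeping of the shift $k\mapsto k+i-2r$ is the only mildly delicate point), and collapse it back via the product-linearization identity \cref{eq:Legendre.linear.combination} to recover $\sum_i A_i(t)\bigl(\sum_k m_k P_k\bigr)P_i + \bigl(\sum_j B_j(t) P_j\bigr)u = A(t,\beta)x(t,\beta)+B(t,\beta)u$ in $L^2(K,\mathbb{R}^n)$. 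Uniqueness of the Fourier--Legendre coefficients then makes the two formulations equivalent.

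Given this two-way correspondence, the controllability equivalence is immediate from Parseval's identity: for any target profile $x_F\leftrightarrow m_F$ under the isometric isomorphism, \cref{eq:isometry} gives $\norm{x(T,\cdot)-x_F}_{L^2}=\norm{m(T)-m_F}_{\ell^2}$, and since the broadcast control $u$ and horizon $T$ are shared by both systems, a control steering $x(\cdot,\cdot)$ into an $\epsilon$-ball about $x_F$ is exactly one steering $m(\cdot)$ into an $\epsilon$-ball about $m_F$, and conversely. Hence the reachable set of \cref{eq:moment.dynamics.prototype.in.example} (with $\hat A,\hat B$ as in \cref{eq:A.hat.and.B.hat}) is dense in $\ell^2(\mathbb{R}^n)$ if and only if the ensemble \cref{eq:linear_ensemble} is $L^2$-ensemble controllable; noting that piecewise\hyp{}constant inputs suffice on the $L^2$ side and are admissible in \Cref{def:approximate_controllability} on the $\ell^2$ side closes both implications.

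The main obstacle --- in fact the only content beyond \Cref{thm:moment.dynamics.for.polynomial.in.beta} and \Cref{prop:controllability.equiv.linear.case} --- is the converse reconstruction step: justifying termwise differentiation of the Fourier--Legendre series in $L^2$ and correctly carrying out the reindexing that turns the banded moment dynamics back into the polynomial ensemble vector field. Everything else is a direct transcription of the $\beta$\hyp{}linear prototype argument.
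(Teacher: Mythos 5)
Your proposal is correct and follows essentially the same route as the paper, which itself disposes of this theorem by remarking that one need only repeat the proof of \cref{prop:controllability.equiv.linear.case} with \cref{thm:moment.dynamics.for.polynomial.in.beta} supplying the correspondence. If anything, you are more explicit than the paper in isolating and justifying the converse reconstruction step (termwise differentiation and reindexing via \cref{eq:Legendre.linear.combination}), which the paper leaves implicit.
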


The equivalence presented in \cref{thm:controllability.equiv.polynomial.case} establishes \emph{duality} between linear ensemble systems and their Legendre-moment systems, which extends the existing results presented in \cite{zeng2016moment}. This duality property provides a new approach to verifying ensemble controllability through approximate controllability of the associated Legendre-moment system. We demonstrate this by using infinite-dimensional LTI systems, which exhibit explicit algebraic characterization.

\begin{lemma}
  \label{lem:approx.controllability}
  Consider the infinite-dimensional LTI system evolving on the Hilbert space $\ell^2(\mathbb{R}^n)$ of the form $\dot{m}(t)= \hat{A}m(t)+\hat{B}u(t)$, 
  where $\hat{A}\in \mathcal{B}(\ell^2(\mathbb{R}^n))$ and $\hat{B}\in\mathcal{B}(\mathbb{R}^m,\ell^2(\mathbb{R}^n))$ are bounded operators, and the control input $u(t)\in L^\infty([0,T],\mathbb{R}^m)$. This system is \emph{approximately controllable} on interval $[0,T]$ if and only if the subspace spanned by
  \begin{equation}
    \label{eq:CoAB}
    \{\hat{A}^n\hat{B}\}=\{\hat{b}_1,\ldots,\hat{b}_m, \hat{A}\hat{b}_1,\ldots,\hat{A}\hat{b}_m, \hat{A}^2\hat{b}_1,\ldots\}
  \end{equation}
  is dense in $\ell^{2}(\mathbb{R}^n)$, where $\hat{b}_1, \ldots, \hat{b}_m$ are the column vectors of the matrix $\hat{B}$.
\end{lemma}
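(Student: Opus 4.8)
The plan is to adapt the classical Kalman-type spanning criterion for infinite-dimensional linear systems with a bounded generator: first reduce approximate controllability to density of the reachable subspace from the origin, then convert that density into the spanning condition \cref{eq:CoAB} by a Hahn--Banach annihilator argument combined with the analyticity of $t\mapsto e^{\hat A t}$. For the reduction, I would use that $\hat A\in\mathcal B(\ell^2(\mathbb R^n))$ implies $e^{\hat A t}=\sum_{k\geqslant 0}\tfrac{t^k}{k!}\hat A^k$ converges in operator norm for all $t\in\mathbb R$, so $\{e^{\hat A t}\}$ is a uniformly continuous group and $e^{\hat A T}$ is a boundedly invertible linear bijection of $\ell^2(\mathbb R^n)$. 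Writing the solution as $m(T)=e^{\hat A T}m(0)+\int_0^T e^{\hat A(T-s)}\hat B u(s)\,\mathrm{d}s$, approximate controllability on $[0,T]$ is then equivalent to $\overline{\mathcal R_T}=\ell^2(\mathbb R^n)$, where
\[
  \mathcal R_T\doteq\Bigl\{\textstyle\int_0^T e^{\hat A(T-s)}\hat B u(s)\,\mathrm{d}s:\ u\in L^\infty([0,T],\mathbb R^m)\Bigr\}
\]
is a linear subspace. (Which dense class of inputs one uses --- $L^\infty$, or the piecewise-constant inputs of \Cref{def:ensemble.controllability} --- is immaterial, as they produce the same reachable-set closure.)

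Next I would identify $\ell^2(\mathbb R^n)$ with its dual via the Riesz map, so that density of $\mathcal R_T$ is equivalent to $\mathcal R_T^{\perp}=\{0\}$. Fixing $z\in\ell^2(\mathbb R^n)$, the condition $\bigl\langle z,\int_0^T e^{\hat A(T-s)}\hat B u(s)\,\mathrm{d}s\bigr\rangle=0$ for every admissible $u$ rewrites, by linearity of the Bochner integral and the inner product, as $\int_0^T\langle \hat B^{*}e^{\hat A^{*}(T-s)}z,\,u(s)\rangle_{\mathbb R^m}\,\mathrm{d}s=0$ for all $u\in L^\infty$. Since $g(s)\doteq\hat B^{*}e^{\hat A^{*}(T-s)}z$ is continuous in $s$, hence admissible, the choice $u=g$ forces $\int_0^T\norm{g(s)}^2\,\mathrm{d}s=0$, so $\hat B^{*}e^{\hat A^{*}t}z=0$ for all $t\in[0,T]$.

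The crux is to upgrade this to $\hat B^{*}(\hat A^{*})^{k}z=0$ for every $k\geqslant 0$. Because $\hat A^{*}$ is bounded, $t\mapsto \hat B^{*}e^{\hat A^{*}t}z=\sum_{k\geqslant 0}\tfrac{t^k}{k!}\hat B^{*}(\hat A^{*})^{k}z$ is a real-analytic (indeed entire) $\mathbb R^m$-valued function; vanishing on the nondegenerate interval $[0,T]$ forces all Taylor coefficients at $t=0$ to vanish, i.e.\ $\hat B^{*}(\hat A^{*})^{k}z=0$ for all $k$. Unwinding the adjoints, this says $\langle z,\hat A^{k}\hat b_j\rangle=0$ for all $j\in\{1,\ldots,m\}$ and all $k\geqslant 0$, i.e.\ $z$ annihilates the set in \cref{eq:CoAB}; conversely, if $z$ is orthogonal to that set then $\hat B^{*}(\hat A^{*})^{k}z=0$ for all $k$, and summing the exponential series gives $\hat B^{*}e^{\hat A^{*}t}z=0$ for all $t$, hence $z\in\mathcal R_T^{\perp}$. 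Thus $\mathcal R_T^{\perp}=\bigl(\operatorname{span}\{\hat A^{n}\hat B\}\bigr)^{\perp}$, so $\overline{\mathcal R_T}=\ell^2(\mathbb R^n)$ if and only if this common annihilator is $\{0\}$, if and only if $\operatorname{span}\{\hat A^{n}\hat B\}$ is dense in $\ell^2(\mathbb R^n)$; combined with the first step, this is the asserted equivalence.

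The step I expect to be the main obstacle is the analyticity upgrade in the last paragraph: it is exactly here that boundedness of $\hat A$ (equivalently, uniform continuity of $\{e^{\hat A t}\}$) is essential --- for a general $C_0$-semigroup the implication ``$\hat B^{*}e^{\hat A^{*}t}z\equiv 0$ on $[0,T]$'' $\Longrightarrow$ ``$\hat B^{*}(\hat A^{*})^{k}z=0$ for all $k$'' can fail, and one would need an additional hypothesis such as analyticity of the semigroup. A lesser technical point to take care over is the justification that the integral orthogonality condition is genuinely equivalent to the pointwise identity $\hat B^{*}e^{\hat A^{*}t}z=0$ on $[0,T]$, which I handle above by testing against $u=g$ and invoking continuity of $g$; one should also note in passing that $\mathcal R_T$ is well defined, the Bochner integral of an $L^\infty$ function over the finite interval $[0,T]$ causing no difficulty.
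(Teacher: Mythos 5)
Your proof is correct. The paper itself offers no argument here --- it simply cites Theorem~3.1.1 and Corollary~3.1.2 of Triggiani's paper and notes that the complex-Banach-space proof carries over to the real case --- so what you have written is in effect the content of that citation: reduce approximate controllability to density of the reachable subspace $\mathcal{R}_T$ (using that $e^{\hat{A}T}$ is boundedly invertible, so the reachable set from any $x_0$ is a translate of $\mathcal{R}_T$), characterize $\mathcal{R}_T^{\perp}$ via the adjoint system, and use that $t\mapsto \hat{B}^{*}e^{\hat{A}^{*}t}z$ is entire because $\hat{A}$ is bounded to convert vanishing on $[0,T]$ into vanishing of all coefficients $\hat{B}^{*}(\hat{A}^{*})^{k}z$. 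Each step checks out, including the two points you flag: testing against $u=g$ with $g(s)=\hat{B}^{*}e^{\hat{A}^{*}(T-s)}z$ continuous is a legitimate way to pass from the integral orthogonality to the pointwise identity, and your observation that the analyticity upgrade is exactly where boundedness of $\hat{A}$ enters (and would fail for a general $C_0$-semigroup) is the right thing to emphasize. The only difference from the paper is that you supply the details it delegates to the reference.
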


\begin{proof}
  See Theorem~3.1.1 and Corollary~3.1.2 in \cite{triggiani1975controllability}. Note that while the original results are for systems in complex Banach spaces, the same proof works for real\hyp{}valued systems as well.
\end{proof}

Henceforth, we will use $\mathrm{Co}(\hat{A},\hat{B})$ to denote the subspace spanned by $\{\hat{A}^n\hat{B}\}$ defined in \cref{eq:CoAB} and call it the \emph{controllability subspace} of $\hat{A}$ and $\hat{B}$. We now illustrate that we can examine ensemble controllability of a linear ensemble system through the denseness of the controllability subspace, $\mathrm{Co}(\hat{A},\hat{B})$, generated by its moment system following \cref{lem:approx.controllability} and our discussion in \Cref{subsec:equivalence.of.the.two.systems}.

\begin{example}
  \label{ex:beta.controllable}
  Consider the scalar linear ensemble system defined on $L^2(K,\mathbb{R})$ controlled by a single input,
  \begin{equation}
    \label{eq:example.beta.ensemble.system.controllable}
    \frac{\mathrm{d}}{\mathrm{d}t}x(t,\beta)=\beta x(t,\beta)+u, \quad \beta\in [-1,1].
  \end{equation}
  Following the steps in \cref{ex:prototype}, we obtain the moment system as in \cref{eq:moment.dynamics.prototype.in.example} with 
  \[
    \hat{A}=\begin{pNiceMatrix}
      0 & \frac{1}{\sqrt{3}}  & 0  & \\
      \frac{1}{\sqrt{3}} & 0 & \frac{2}{\sqrt{15}} & \\
      0 & \frac{2}{\sqrt{15}} & 0 & \smash{\ddots} \\
      & & \smash{\ddots} & \smash{\ddots}
    \end{pNiceMatrix}\ %
    \text{and}\ %
    \hat{B}=\begin{pmatrix} \sqrt{2} & 0 & 0 & \dots \end{pmatrix}'.
  \]
  The subspace ${\rm Co}(\hat{A}, \hat{B})$ is spanned by
  \begin{equation}
    \label{eq:example.beta.controllable.moment.kalman}
    \Bigl\{
    \begin{pNiceMatrix}
      1 \\ 0 \\ 0 \\ 0 \\ 0 \\ 0 \\ \vdots 
    \end{pNiceMatrix},
    \begin{pNiceMatrix} 
      0 \\ \frac{1}{\sqrt{3}} \\ 0 \\ 0 \\ 0 \\ 0 \\ \vdots 
    \end{pNiceMatrix},
    \begin{pNiceMatrix} 
      \frac{1}{3} \\ 0 \\ \frac{2}{3\sqrt{5}} \\ 0 \\ 0 \\ 0 \\ \vdots 
    \end{pNiceMatrix},
    \begin{pNiceMatrix} 
      0 \\ \frac{\sqrt{3}}{5} \\ 0 \\ \frac{2}{5\sqrt{7}} \\ 0 \\ 0 \\ \vdots 
    \end{pNiceMatrix},
    \begin{pNiceMatrix} 
      \frac{1}{5} \\ 0 \\ \frac{4}{7\sqrt{5}} \\ 0 \\ \frac{8}{105} \\ 0 \\ \vdots 
    \end{pNiceMatrix}, \ldots\Bigr\},
  \end{equation}
  which is \cref{eq:CoAB} up to a factor $\sqrt{2}$.
  Due to the upper triangular structure of the vectors in \cref{eq:example.beta.controllable.moment.kalman}, each element in the canonical basis of $\ell{}^2$, that is, $\{(1,0,0,\ldots)', (0,1,0,\ldots)',$ $(0,0,1,\ldots)', \ldots\}$, is a finite linear combination of vectors in \cref{eq:example.beta.controllable.moment.kalman}, which implies that $\mathrm{Co}(\hat{A},\hat{B})$ is dense in $\ell^2$. Therefore, by \cref{lem:approx.controllability}, the moment system is approximately controllable, and, equivalently, the ensemble \cref{eq:example.beta.ensemble.system.controllable} is $L^2$\hyp{}controllable.
\end{example}

\begin{example}
  \label{ex:beta.oscillator.uncontrollable}
  Consider the harmonic oscillator ensemble defined on $L^2(K,\mathbb{R}^2)$ with the frequency $\beta$ varying on $[-1,1]$, given by,
  \begin{equation}
    \label{eq:oscillator}    
    \frac{\mathrm{d}}{\mathrm{d}t} x(t,\beta)=\beta
    \begin{pNiceMatrix}
      0 & 1 \\
      -1 & 0
    \end{pNiceMatrix} x(t,\beta)+
    \begin{pNiceMatrix} 1 \\ 0 \end{pNiceMatrix}u(t).
  \end{equation}
  According to \cref{prop:controllability.equiv.linear.case}, controllability of this ensemble system depends on that of its moment system as in \cref{eq:moment.dynamics.prototype.in.example}, where 
  \[
    \hat{A}=\begin{pNiceArray}{cc:cc:cc:c}
      \Block{2-2}<\Large>{\mathbf{0}} &  & 0 & \frac{1}{\sqrt{3}} & \Block{2-2}<\Large>{\mathbf{0}} & & \\
      &  & -\frac{1}{\sqrt{3}} & 0 & & & \\
      \hdottedline
      0 & \frac{1}{\sqrt{3}} & \Block{2-2}<\Large>{\mathbf{0}} & & 0 & \frac{2}{\sqrt{15}} & \\
      -\frac{1}{\sqrt{3}} & 0 & & & -\frac{2}{\sqrt{15}} & 0 & \\
      \hdottedline
      \Block{2-2}<\Large>{\mathbf{0}} & & 0 & \frac{2}{\sqrt{15}} & \Block{2-2}<\Large>{\mathbf{0}} & & \Block{2-1}<\large>{\ddots} \\
      & & -\frac{2}{\sqrt{15}} & 0 & & & \\
      \hdottedline
      & & & & \Block{2-2}<\large>{\ddots} & & \Block{2-1}<\large>{\ddots} \\
      & & & & & &
    \end{pNiceArray}
  \]
  and $\hat{B}=\begin{pmatrix} \sqrt{2} & 0 & 0 & \ldots \end{pmatrix}'$. We can then compute the controllable subspace $\mathrm{Co}(\hat{A},\hat{B})$ which is spanned by
  \[\biggl\{
    \begin{aligned}
      &(1,0,0,0,\ldots)', (0,0,0,\tfrac{\sqrt{3}}{3},0,0,\ldots)', \\
      &(\tfrac{1}{3},0,0,0,\tfrac{2}{3\sqrt{5}},0,\ldots)', (0,0,0,\ast,\ast,\ldots)', \ldots
    \end{aligned}
    \biggr\}.
  \]
  Obviously the vector $(0, 1, 0, \ldots)'$ belongs to the orthogonal complement of $\mathrm{Co}(\hat{A},\hat{B})$, and thus $\mathrm{Co}(\hat{A},\hat{B})$ is not dense in $\ell^2(\mathbb{R}^2)$. This suggests that the moment system is not approximately controllable, and in turn the system in \cref{eq:oscillator} is not $L^2$-ensemble controllable.
\end{example}

The established duality presented in this section naturally translates the control design for linear ensemble systems to their dual moment systems, which we will elaborate in the following sections.

\section{Ensemble Control Design via Moment Systems} \label{sec:control.design}

To study the infinite-dimensional moment system defined on $\ell^2(\mathbb{R}^n)$, it is natural to consider its truncated counterparts as finite-dimensional approximations. Unfortunately, conventional truncation methods often fall short in accurately representing the controllability properties of the original system. However, in this section, we address this limitation by exploiting the banded matrix structure in moment dynamics, demonstrating the approximation of the infinite-dimensional moment system of the form \cref{eq:moment.system.banded.system.matrix} through its finite truncations. This allows the seamless application of tools and theory for finite-dimensional linear systems to comprehend and manipulate the Legendre-moment system. The convergence of truncated systems inspires the development of a control scheme for the moments in \cref{eq:moment.system.banded.system.matrix}, transforming the control design for linear ensemble systems into a sequence of tasks for systems defined in finite-dimensional spaces. Furthermore, we establish a rigorous estimation of the error resulting from such finite approximations, a new contribution to the numerical application of ensemble control theory.

To set the stage, we first give a mathematical definition of the truncated systems and analyze its approximation properties. In what follows, we identify $\ell^2(\mathbb{R}^n)$ with $\ell^2$, since their difference is irrelevant to the truncation (see \cref{rem:long.remark}).

\subsection{Truncation of Infinite-Dimensional Systems} \label{subsec:truncation}
Let us consider a time-invariant infinite-dimensional linear system,
\begin{equation}
  \label{eq:time-inv.inf.linear.system}
  \dot{m}(t)=\hat{A}m(t)+\hat{B}u(t), \quad m(t)\in\ell^2,
\end{equation}
where $\hat{A}$ has the form of an infinite $b$\hyp{}banded matrix with uniformly bounded entries and $\hat{B}$ is a finite rank operator in the form of \cref{eq:polynomials.in.beta.B.hat}. For an admissible $u(t)$, we can write the solution $m(t)$ as (see \cite{triggiani1975controllability})
\begin{equation}
  \label{eq:m(t).diff.eq.solution.infinite.dim}
  m(t)=e^{t\hat{A}}m(0)+\int_{0}^{t} e^{(t-\tau)\hat{A}}\hat{B}u(\tau)\,\mathrm{d}\tau.
\end{equation}
Our goal is to approximate the system in \cref{eq:time-inv.inf.linear.system} and its solution~\cref{eq:m(t).diff.eq.solution.infinite.dim} using finite truncations. To this end, let us first define the truncated systems. Given an infinite sequence $\xi=(\xi_0, \xi_1, \ldots)'$ in $\ell^2$ (represented in the form of an infinite-dimensional column vector), a positive integer $N\in \mathbb{N}_{+}$, and a vector $\bar{\xi}=(\xi_0, \ldots, \xi_{N-1})'\in \mathbb{R}^{N}$, if $P_N$ and $\iota$ denote the projection and inclusion operators, respectively, i.e.,
\begingroup
\allowdisplaybreaks
\begin{align*}
  P_N &:\ell^2 \to\mathbb{R}^{N}, & P_N\xi &\doteq{}(\xi_0,\ldots,\xi_{N-1})^\prime; \\
  \iota &:\mathbb{R}^{N} \to\ell^2, & \iota\bar{\xi} &\doteq{}(\xi_0, \ldots, \xi_{N-1}, 0, \ldots)^\prime,
\end{align*}
\endgroup
we can then define the truncated operators $\hat{A}_{N}:\mathbb{R}^{N}\to \mathbb{R}^{N}$ and $\hat{B}_{N}:\mathbb{R}^m \to \mathbb{R}^{N}$ as $\hat{A}_{N}\doteq{}P_{N}\hat{A}\iota$ and $\hat{B}_{N}\doteq{}P_{N}\hat{B}$. \Cref{fig:truncation} is an illustration of the truncation of operators. Obviously, the truncations $\hat{A}_N$ are also $b$\hyp{}banded. By setting the initial condition $\bar{m}(0)\doteq{}P_{N}m(0)$ and the state $\bar{m}\in\mathbb{R}^{N}$, the truncated moment system of order $N$ evolving on $\mathbb{R}^{N}$ is defined by
\begin{equation}
  \label{eq:truncated.moment.dynamics}
  \dot{\bar{m}}(t) = \hat{A}_N\bar{m}(t)+\hat{B}_N \bar{u}(t), \quad \bar{m}(0)=P_{N}m(0).
\end{equation}

It is worth highlighting that the truncation is \emph{not} conducted on the moment state $m(t)$, but rather on the system dynamics, i.e., both $\hat{A}_N$ and $\hat{B}_N$ are truncated operators. Therefore, for each truncation order $N$, the truncated system~\cref{eq:truncated.moment.dynamics} follows different dynamics. In the next theorem, we compare the truncated finite-dimensional systems with the infinite-dimensional moment system, and show that the states $\bar{m}(t)$ converge to the infinite moment state $m(t)$ as $N$ increases.

\begin{figure}[htbp]
  \centering
  \begin{tikzpicture}[scale=0.75, line cap=round]
    \tikzstyle{textbox} = [rectangle, rounded corners, minimum width=25mm, minimum height=5mm, text centered, draw=black!50]
    \tikzstyle{arrow} = [->, >=stealth]
    \draw[semithick, loosely dash dot] (4.3,0.7) -- (0,5);
    \draw[semithick] (0.6,5) -- (4.3,1.3);
    \draw[semithick] (0,4.4) -- (3.7,0.7);
    \draw[semithick] (4.5,5) -- (3,5);
    \draw[semithick] (0,2) -- (0,0.5);
    \draw[thick, dotted, line cap=butt] (4.5,0.5) -- (4.7,0.3);
    \draw[thick, dotted, line cap=butt] (4.6,5) -- (4.9,5);
    \draw[thick, dotted, line cap=butt] (0,0.4) -- (0,0.1);
    \node at (2.4,0) {$\hat{A}$};
    \draw[densely dashed, red, thick] (3,5) -- (0,5) -- (0,2) -- (4.5,2);
    \draw[densely dashed, red, thick] (3,5) -- (3,0.5);
    \node[below, red] at (1.5,1.9) {$\hat{A}_{N}$};
    \node (txt) [textbox] at (5.7,3.6) {$\hat{A}_{N}=P_{N}\hat{A}\,\iota\in\mathbb{R}^{N\times N}$};
    \draw[arrow, very thin, draw=black!50] (txt) -- (5.7,3) -- (3.05,3);
  \end{tikzpicture}
  \caption{An illustration of finite truncation of a banded matrix. $\hat{A}_N$ is an $N$-by-$N$ square truncation of $\hat{A}$.}
  \label{fig:truncation}
\end{figure}
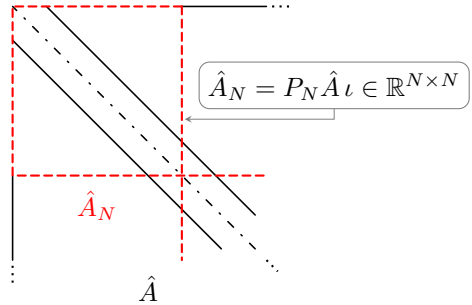

\begin{theorem}
  \label{thm:truncated.systems.approx}  
  Consider the moment system in \cref{eq:time-inv.inf.linear.system} where $\hat{A}$ is an infinite $b$\hyp{}banded matrix with uniformly bounded entries and $\hat{B}$ is a finite rank operator in the form of \cref{eq:polynomials.in.beta.B.hat}. Let $\hat{A}_{N}$ denote the truncated operator of $\hat{A}$, and $e^{t\hat{A}}$ and $e^{t\hat{A}_{N}}$ denote the semigroups of bounded linear operators generated by $\hat{A}$ and $\hat{A}_{N}$, respectively. We have that, for any $\xi\in\ell^2$, 
  \begin{equation}
    \label{eq:semigroup.convergence}
    (\iota{}e^{t\hat{A}_{N}}P_{N})\xi\to e^{t\hat{A}}\xi
  \end{equation}
  in $\ell^2$ as $N\to\infty$. Consequently, following the same control input $u(t)$, the moment $m(t)$ in \cref{eq:time-inv.inf.linear.system} and its truncation $\bar{m}(t)$ in \cref{eq:truncated.moment.dynamics} satisfy the approximation property, $\iota\bar{m}(t)-m(t)\to 0$, in $\ell^2$ as $N\to\infty$.
\end{theorem}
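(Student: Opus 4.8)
The plan is to use two structural facts: that $\hat A$ is a \emph{bounded} operator on $\ell^2$ (so $e^{t\hat A}$ is simply the norm-convergent exponential series, not merely a strongly continuous semigroup), and that bandedness forces any \emph{fixed} power $\hat A^k$ to act on a finitely supported vector exactly as its finite truncation does, once $N$ is large enough. First I would record the uniform bounds. Since $\hat A$ is $b$-banded with uniformly bounded entries, \cref{lem:bounded.banded.matrix} gives $\hat A\in\mathcal B(\ell^2)$. Writing $Q_N\doteq\iota P_N$ for the coordinate projection of $\ell^2$ onto the first $N$ coordinates and using $\hat A_N=P_N\hat A\iota$, one has $\norm{\hat A_N}\le\norm{\hat A}$, hence $\norm{\iota e^{t\hat A_N}P_N}\le e^{t\norm{\hat A_N}}\le e^{t\norm{\hat A}}$, together with $\norm{e^{t\hat A}}\le e^{t\norm{\hat A}}$. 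By a standard $3\varepsilon$ argument, these $N$-uniform bounds reduce the claimed strong convergence \cref{eq:semigroup.convergence} to the case of finitely supported $\xi$.

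Next, the core estimate. Expanding both exponentials as norm-convergent series and using the identity $\iota\hat A_N^k P_N=(Q_N\hat A)^kQ_N$, I get
\[
  \bigl(\iota e^{t\hat A_N}P_N-e^{t\hat A}\bigr)\xi=\sum_{k=0}^{\infty}\frac{t^k}{k!}\bigl[(Q_N\hat A)^kQ_N-\hat A^k\bigr]\xi .
\]
Suppose $\xi$ is supported in coordinates $[0:L-1]$. Since $\hat A$ is $b$-banded, $\hat A^k$ is $kb$-banded, so $\hat A^k\xi$ is supported in $[0:L-1+kb]$; a one-line induction on $k$ then shows $(Q_N\hat A)^kQ_N\xi=\hat A^k\xi$ whenever $N\ge L+kb$. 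Hence, given $\varepsilon>0$, I first choose $K$ so large that $2\norm{\xi}\sum_{k>K}(t\norm{\hat A})^k/k!<\varepsilon$ — possible because the series for $e^{t\norm{\hat A}}$ converges, and $K$ may be taken uniformly for $t$ in any compact interval — and then take $N\ge L+Kb$: all terms with $k\le K$ vanish, and the tail is bounded in norm by $2\norm{\xi}\sum_{k>K}(t\norm{\hat A})^k/k!<\varepsilon$, using $\norm{(Q_N\hat A)^kQ_N\xi}\le\norm{\hat A}^k\norm{\xi}$ and $\norm{\hat A^k\xi}\le\norm{\hat A}^k\norm{\xi}$. This establishes \cref{eq:semigroup.convergence} for finitely supported $\xi$, hence for all $\xi\in\ell^2$, with convergence uniform for $t$ in compact subsets of $[0,\infty)$. (Alternatively one could invoke the Trotter--Kato approximation theorem, but the direct series argument is self-contained here precisely because $\hat A$ is bounded.)

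Finally, the consequence. Writing $m(t)$ via the variation-of-constants formula \cref{eq:m(t).diff.eq.solution.infinite.dim} and $\iota\bar m(t)$ via the same formula applied to \cref{eq:truncated.moment.dynamics} with $\bar u=u$, $\bar m(0)=P_Nm(0)$, and $\hat B_N=P_N\hat B$, I obtain
\[
  \iota\bar m(t)-m(t)=\bigl[\iota e^{t\hat A_N}P_N-e^{t\hat A}\bigr]m(0)+\int_0^t\bigl[\iota e^{(t-\tau)\hat A_N}P_N-e^{(t-\tau)\hat A}\bigr]\hat Bu(\tau)\,\mathrm d\tau .
\]
The first term tends to $0$ by \cref{eq:semigroup.convergence}. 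For the integral, the integrand tends to $0$ pointwise in $\tau$, by applying \cref{eq:semigroup.convergence} to the fixed vector $\hat Bu(\tau)$ — which, by \cref{eq:polynomials.in.beta.B.hat}, is finitely supported — and it is dominated by $2e^{t\norm{\hat A}}\norm{\hat B}\norm{u}_{L^\infty}$, an integrable (constant) function on $[0,t]$; dominated convergence for Bochner integrals then gives the integral $\to0$. Hence $\iota\bar m(t)-m(t)\to0$ in $\ell^2$.

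The main obstacle is not a deep one: it is the bandedness observation that each fixed power of $\hat A$ is reproduced \emph{exactly} by its truncation on finitely supported data once $N$ exceeds a $k$-dependent threshold, leaving only a uniformly controllable exponential tail. The one point requiring care is the order of quantifiers $\varepsilon\to K\to N$ (and the fact that $K$, hence $N$, can be chosen uniformly for $t$ in a compact interval), while the reduction to finitely supported vectors via uniform bounds and the dominated-convergence step for the integral term are routine.
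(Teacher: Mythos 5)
Your proof is correct, but it takes a genuinely different route from the paper's. The paper proves the semigroup convergence \cref{eq:semigroup.convergence} by verifying the hypotheses of the Trotter--Kato theorem (\cref{thm:trotter-kato}): it shows strong convergence of the generators, $\iota\hat{A}_NP_N\xi-\hat{A}\xi\to 0$, by partitioning $\hat{A}$ into blocks and using bandedness to see that the off-diagonal blocks $\hat{A}_{12},\hat{A}_{21}$ only touch the $b/2$ coordinates adjacent to the cut, so their contribution is controlled by tail norms of $\xi$ that vanish as $N\to\infty$; it then checks the range condition, which is automatic for bounded $\hat{A}$. You instead bypass the semigroup approximation machinery entirely: since $\hat{A}$ is bounded, both exponentials are norm-convergent power series, the uniform bounds $\norm{\iota e^{t\hat{A}_N}P_N},\norm{e^{t\hat{A}}}\leqslant e^{t\norm{\hat{A}}}$ reduce the claim to finitely supported $\xi$, and bandedness makes each fixed power $(Q_N\hat{A})^kQ_N\xi=\hat{A}^k\xi$ exactly once $N$ exceeds a $k$-dependent threshold, leaving a uniformly small exponential tail. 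Both arguments are sound and both yield uniformity on compact time intervals. Your version is more elementary and self-contained (no appeal to \cref{thm:trotter-kato}), with an explicit and carefully ordered choice of quantifiers; the paper's version is shorter given the appendix and would extend to unbounded generators, which your series expansion would not. One harmless slack: with the paper's convention that $b$-banded means $a_{ij}=0$ for $|i-j|>b/2$, the support of $\hat{A}^k\xi$ grows by $kb/2$ rather than $kb$, so your threshold $N\geqslant L+Kb$ is a safe over-estimate. For the consequence, both proofs use the variation-of-constants formula; you are more explicit in justifying the limit of the integral term via dominated convergence with the constant dominating function $2e^{t\norm{\hat{A}}}\norm{\hat{B}}\norm{u}_{L^\infty}$, where the paper simply asserts the convergence.
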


\begin{proof}
  Note that for the uniformly continuous semigroup (see \cref{def:semigroup}) $\iota e^{t\hat{A}_N}P_N$, $t\geqslant 0$, its infinitesimal generator is $\iota \hat{A}_NP_N\in\mathcal{B}(\ell^2)$. To show the convergence in \cref{eq:semigroup.convergence}, by \cref{thm:trotter-kato} in \cref{appendix:semigroup}, for bounded infinitesimal generators $\iota\hat{A}_NP_N$ and $\hat{A}$, we need to show that (i) $\iota\hat{A}_NP_N\xi - \hat{A}\xi\to 0$ for all $\xi\in\ell^2$ as $N\to\infty$, and (ii) there exists a $\lambda_0$ with $\mathrm{Re}\,\lambda_0>\omega$ for which $(\lambda_0I-\hat{A})$ is surjective. Since $\hat{A}$ is bounded, (ii) holds for all $\lambda_0 > \norm{\hat{A}}$. Next, we denote $\delta \doteq \hat{A}\xi - \iota \hat{A}_N P_N \xi$ and show that $\delta\rightarrow0$ as $N\rightarrow \infty$.

  Let us write $\hat{A}$ into a block matrix form $\hat{A} = \begin{psmallmatrix} \hat{A}_N & \hat{A}_{12} \\ \hat{A}_{21} & \hat{A}_{22} \end{psmallmatrix}$. Since $\hat{A}$, $\hat{A}_N$, and $\hat{A}_{22}$ are all $b$\hyp{}banded with the same uniform bound for their entries, say $M$, then by \cref{lem:bounded.banded.matrix} in \cref{appendix:infinite.banded.matrix}, the operator norms $\norm{\hat{A}}$, $\norm{\hat{A}_N}$, and $\norm{\hat{A}_{22}}$ have the same upper bound $M(b+1)$. Without loss of generality, let us assume that $N\gg b$. For any $\xi \in \ell^2$, if we denote $\xi=\begin{psmallmatrix} \xi_N \\ \tilde{\xi} \end{psmallmatrix}$, then
  \(\delta = \begin{psmallmatrix} \hat{A}_{12}\tilde{\xi} \\ \hat{A}_{21}\xi_N + \hat{A}_{22}\tilde{\xi} \end{psmallmatrix},\)
  so $\norm{\delta}^2 \leqslant \norm{\hat{A}_{12}\tilde{\xi}}^2 + \norm{\hat{A}_{21}\xi_{N}}^2 + \norm{\hat{A}_{22}\tilde{\xi}}^2$. Obviously, $\norm{\hat{A}_{22}\tilde{\xi}}\leqslant M(b+1)\norm{\tilde{\xi}}$. And due to the bandedness of $\hat{A}$, only the entries in the upper right corner of $\hat{A}_{21}$ and the lower left corner of $\hat{A}_{12}$ are nonzero (see \cref{fig:truncation} for illustration). So if we write $\xi=(k_0, k_1, \ldots, k_N, \ldots)'$, and denote $\xi_{N}^{-}=(k_{N-b/2+1}, \ldots, k_N)'$ and $\xi_{N}^{+}=(k_{N+1}, \ldots, k_{N+b/2})'$, then we have $\norm{\hat{A}_{12}\tilde{\xi}}\leqslant M(\tfrac{b}{2}+1)\norm{\xi_{N}^{+}}$ and $\norm{\hat{A}_{21}\xi_{N}}\leqslant M(\tfrac{b}{2}+1)\norm{\xi_{N}^{-}}$. Lastly, we note that $\norm{\tilde{\xi}}$, $\norm{\xi_{N}^{+}}$, and $\norm{\xi_{N}^{-}}$ all go to $0$ as $N\to\infty$, and thus we conclude $\delta\to 0$ as $N\to\infty$. Therefore, by \cref{thm:trotter-kato}, we prove \cref{eq:semigroup.convergence}, where the convergence holds uniformly on any bounded time interval.

  Finally, note that for sufficiently large $N$, $\iota \hat{B}_{N}u = \hat{B}u$,  so for any given $t>0$,
  \[
    \begin{aligned}
      \iota\bar{m}(t) - m(t) = &\bigl(\iota e^{t\hat{A}_{N}}\bar{m}(0) - e^{t\hat{A}}m(0)\bigr) \\
      &+\int_{0}^{t} \bigl(\iota e^{(t-\tau)\hat{A}_N}P_N - e^{(t-\tau)\hat{A}}\bigr) \hat{B}{u}(\tau)\,\mathrm{d}\tau \to 0 \text{ as } N\to\infty.
    \end{aligned}
    \]
\end{proof}

The approximation of the infinite-dimensional linear system by its finite truncations motivates and plays a central role in our development of a control design method for linear ensemble systems, which we will elaborate in the next section. To conclude our investigation of the truncated system, we present some results to another pertinent question that rises naturally with finite truncations, that is, does controllability of the truncated systems carries over to their infinite-dimensional limit?

\begin{proposition}
  \label{prop:controllability.and.truncations}
  The linear ensemble system in \cref{eq:linear_ensemble} with $A(\beta)=\beta A$ and $B(\beta)=B$ is ensemble controllable if and only if all truncated systems of the corresponding moment system are controllable.
\end{proposition}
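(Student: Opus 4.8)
The plan is to use the duality machinery already developed: by \Cref{thm:controllability.equiv.polynomial.case}, $L^2$-ensemble controllability of the ensemble with $A(\beta)=\beta A$, $B(\beta)=B$ is equivalent to approximate controllability of the infinite-dimensional moment system $\dot m = \hat A m + \hat B u$ in $\ell^2(\mathbb R^n)$, which by \Cref{lem:approx.controllability} is equivalent to $\mathrm{Co}(\hat A,\hat B)$ being dense in $\ell^2(\mathbb R^n)$. The truncated system of order $N$ is $\dot{\bar m} = \hat A_N \bar m + \hat B_N \bar u$ on $\mathbb R^N$, which (by finite-dimensional Kalman theory) is controllable iff $\mathrm{Co}(\hat A_N, \hat B_N) = \mathbb R^N$. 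So the statement reduces to the purely linear-algebraic claim: $\mathrm{Co}(\hat A,\hat B)$ is dense in $\ell^2$ if and only if $\mathrm{Co}(\hat A_N,\hat B_N)=\mathbb R^N$ for every $N$.

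For the ``only if'' direction, I would argue by contraposition. Suppose some truncated system of order $N$ is not controllable; then there is a nonzero $v\in\mathbb R^N$ with $v'\hat A_N^k \hat B_N = 0$ for all $k\ge 0$. The key structural observation is that, because $\hat B = \iota \hat B_N$ for $N$ large enough (here $N_2=0$, so in fact $\hat B$ is supported in the first $n$ coordinates and $\hat B_N = P_N\hat B$, $\iota\hat B_N=\hat B$ for all $N\ge 1$) and because $\hat A$ is $2n$-banded, the vectors $\hat A^k\hat B$ have a controlled support growth: $\hat A^k\hat B$ is supported in the first $n(k+1)$ coordinates. Hence for indices $k$ small relative to $N$, $P_N\hat A^k\hat B = \hat A_N^k\hat B_N$ exactly (the truncation does not ``see'' the part of $\hat A$ outside the block because the iterate hasn't reached the boundary yet). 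This lets one transfer a finite obstruction for the truncated system into an obstruction for a finite piece of the infinite controllability subspace. The cleanest route, though, is the converse packaging: I would instead prove ``all truncations controllable $\Rightarrow$ $\mathrm{Co}(\hat A,\hat B)$ dense'' and ``$\mathrm{Co}(\hat A,\hat B)$ dense $\Rightarrow$ all truncations controllable'' separately, using the support-growth lemma in both.

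For ``$\mathrm{Co}(\hat A,\hat B)$ dense $\Rightarrow$ all truncations controllable'': fix $N$ and suppose, for contradiction, $\mathrm{Co}(\hat A_N,\hat B_N)\ne\mathbb R^N$, witnessed by $0\ne v\in\mathbb R^N$. Consider $\iota v\in\ell^2$. Using the support-growth property, for $k$ with $n(k+1)\le N$ we have $\langle \iota v, \hat A^k\hat B\rangle = v'\hat A_N^k\hat B_N = 0$. For larger $k$ this need not hold, so $\iota v$ need not annihilate all of $\mathrm{Co}(\hat A,\hat B)$ — which is exactly why ``truncation controllable'' is genuinely weaker for a single $N$, and why we need \emph{all} $N$. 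The fix: from the single bad $v$ at level $N$, one shows the bad set propagates, i.e. the truncated systems at \emph{every} larger order $N'$ inherit an uncontrollable direction. Concretely, by Cayley–Hamilton at level $N$, $v'\hat A_N^k\hat B_N=0$ for all $k\ge 0$; one then checks, again via bandedness and support growth, that $\iota v$ (suitably, the eventual stabilization of these witnesses) annihilates every $\hat A^k\hat B$, contradicting density. The standard way to make this rigorous is the ``nested kernels stabilize'' argument: the subspaces $W_N := \{\iota v : v\in\mathbb R^N,\ \mathrm{Co}(\hat A_N,\hat B_N)^\perp\}$ form a nested family whose union, if nonempty, produces by a diagonal/compactness argument a nonzero functional vanishing on all of $\mathrm{Co}(\hat A,\hat B)$.

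For ``all truncations controllable $\Rightarrow$ $\mathrm{Co}(\hat A,\hat B)$ dense'': suppose $\mathrm{Co}(\hat A,\hat B)$ is not dense, so there is $0\ne \eta\in\ell^2$ with $\langle\eta,\hat A^k\hat B\rangle=0$ for all $k$. Pick $N$ large enough that $P_N\eta\ne 0$. The obstruction is that $P_N\eta$ need not annihilate $\hat A_N^k\hat B_N$, because $\hat A_N^k\hat B_N=P_N\hat A^k\hat B$ only holds for small $k$. However, once $k$ is large, $\hat A_N^k\hat B_N$ and $P_N\hat A^k\hat B$ differ by terms ``fed in from the boundary'' of the truncation block. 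To control this I would again use that $\hat A$ is $2n$-banded and $\hat B$ has finite support: choosing $N$ large enough that both $P_N\eta\ne 0$ \emph{and} the first several powers (up to $N/n$, say, enough for Kalman at dimension $N$) satisfy $\hat A_N^k\hat B_N=P_N\hat A^k\hat B$ — impossible in general for all $k\le N$, so the argument must instead combine Cayley–Hamilton with the band structure. The honest statement is: $\mathrm{rank}\,\mathrm{Co}(\hat A_N,\hat B_N)$ is nondecreasing and, by the upper-triangular-like propagation seen in \Cref{ex:beta.controllable} (each new power of $\hat A$ reaches one block further down), $\mathrm{Co}(\hat A_N,\hat B_N)$ contains the first $\min(N, n\cdot(\text{reachable depth}))$ canonical basis blocks; full density of $\mathrm{Co}(\hat A,\hat B)$ corresponds to this depth being infinite, which holds iff it is $N$ at every finite level.

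\textbf{Main obstacle.} The crux is that finite truncation and powers of $\hat A$ do not commute: $\hat A_N^k\hat B_N\ne P_N\hat A^k\hat B$ once $k$ is large enough for the band to hit the truncation boundary. The essential work is to show that this boundary discrepancy is harmless for the controllability question — that the ``depth'' to which $\mathrm{Co}(\hat A,\hat B)$ reaches in $\ell^2$ is faithfully recorded by the ranks of the truncated controllability matrices for all $N$. For the specific structure here ($\hat A = (\text{tridiagonal }c\text{-matrix})\otimes A$, $\hat B=e_0\otimes B$), this is transparent: $\hat A^k\hat B$ lives exactly in blocks $0,\dots,k$ with leading block-coefficient a nonzero multiple of $A^k B$, giving a clean block-upper-triangular Kalman structure, so density of $\mathrm{Co}(\hat A,\hat B)$ is equivalent to the (scalar, per-block) reachability condition holding at every depth, which is precisely controllability of every $\hat A_N$. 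Packaging this cleanly, rather than any deep analytic difficulty, is where the proof's effort goes.
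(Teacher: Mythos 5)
Your final paragraph is, in substance, the paper's proof: the paper computes $\hat{A}_N=C_N\otimes A$ and $\hat{B}_N=\sqrt{2}\,(B,0,\ldots,0)'$, writes the controllability matrix of the truncated system as a block upper triangular matrix with diagonal blocks $d_0=B$ and $d_i=\bigl(\prod_{j=1}^{i}c_j\bigr)A^iB$, and reads off both implications from that triangular structure together with \cref{lem:approx.controllability} --- exactly the ``clean block-upper-triangular Kalman structure'' you arrive at, and at essentially the same level of detail on the final verification. One caution about the middle of your proposal: the general-purpose arguments you sketch there (the ``nested kernels stabilize''/diagonal-compactness route, and the attempt to pass an annihilator between $\mathrm{Co}(\hat{A},\hat{B})$ and the truncations by controlling boundary discrepancies) cannot be completed for arbitrary banded $\hat{A}$ and finite-rank $\hat{B}$, because the paper's own \cref{ex:truncated.controllable.infinite.noncontrollable} shows the equivalence is simply false at that level of generality. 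What makes the proposition true is precisely the tensor-product structure $\hat{A}=C\otimes A$, $\hat{B}=e_0\otimes B$ with $C$ a Jacobi matrix with nonzero off-diagonal entries, which you only invoke at the end; the proof should start from that structure rather than treat the general truncation/commutation issues as obstacles to be patched.
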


\begin{proof}
  As shown in \cref{ex:prototype}, for a truncation order $N\in\mathbb{Z}_{+}$, the matrices $\hat{A}_N$ and $\hat{B}_N$ in the truncated dynamics are give by $\hat{A}_{N}=C_{N}\otimes A$ and $\hat{B}_{N}=\sqrt{2}\begin{pmatrix} B & 0 & \ldots & 0 \end{pmatrix}'$, where
  \[
    C_{N}=\begin{pNiceMatrix}
      0 & c_1 & & \\
      c_1 & \ddots & \ddots & \\
       & \ddots & \ddots & c_{N-1} \\
       & & c_{N-1} & 0
    \end{pNiceMatrix}.
  \]
  Accordingly, we obtain the controllability matrix of this finite\hyp{}dimensional system as an upper triangular block matrix:
  \begin{equation}
    \label{eq:controllability.matrix.for.truncated.systems}
    \mathrm{Co}(\hat{A}_{N}, \hat{B}_{N})=\sqrt{2}
    \begin{pNiceArray}{cccc}
      d_0 & & \Block{2-2}<\large>{\star} & \\
      & d_1 & & \\
      & & \smash{\ddots} & \\
      & & & d_{N-1}
    \end{pNiceArray},
  \end{equation}
  where $d_0=B$ and $d_{i}=\bigl(\prod_{j=1}^{i}c_{j}\bigr)A^{i}B$ for $i\geqslant 1$. This ensemble system is $L^2$-ensemble controllable if and only if the span of $\mathrm{Co}(\hat{A},\hat{B})$ is dense in $\ell^2$ by \cref{lem:approx.controllability}. One can verify that if certain truncated moment system is not controllable, i.e., the matrix in \cref{eq:controllability.matrix.for.truncated.systems} is not full rank, there exists a non-trivial left null space for $\mathrm{Co}(\hat{A},\hat{B})$, contradicting  ensemble controllability. On the other hand, controllability of each truncated moment system, together with the upper triangular structure in \cref{eq:controllability.matrix.for.truncated.systems}, indicate that the only vector that left-nullifies $\mathrm{Co}(\hat{A},\hat{B})$ is the trivial zero vector, which proves ensemble controllability.
\end{proof}

\Cref{prop:controllability.and.truncations} is consistent with the results in \cite{li2016ensemble,li2020separating}. In particular, we note that although \cite{li2016ensemble} studies uniform ensemble controllability, it implies  $L^2$-ensemble controllability, because $\|x(T,\cdot)-x_f(\cdot)\|_{L^2}^2 \leqslant K \|x(T,\cdot)-x_f(\cdot)\|_\infty^2$. In general, controllability of an infinite-dimensional linear system and that of its truncations may not coincide, as shown in the following example.

\begin{example}
  \label{ex:truncated.controllable.infinite.noncontrollable}
  Consider the linear system of the form \cref{eq:time-inv.inf.linear.system} where
  \[
    \hat{A}=\begin{pNiceMatrix}[columns-width = 3mm]
      0 & 0 & 0 & \\
      1 & 0 & 0 & \cdots\\
      0 & 1 & 0 & \\
      0 & 0 & 1 & \\
        & & & \smash{\ddots}
    \end{pNiceMatrix} \ \text{and} \ %
    \hat{B}=\begin{pNiceMatrix} 1 \\ -2 \\ 0 \\ 0 \\ \smash{\vdots} \end{pNiceMatrix}.
  \]
  Obviously, for each order $N\geqslant 1$, the truncated system is controllable, since we can verify that its controllability matrix,
  \[
    \mathrm{Co}(\hat{A}_{N},\hat{B}_{N})=
    \begin{pNiceMatrix}
      1 & & & \\
      -2 & \Ddots & & \\
       & \Ddots & & \\
       & & -2 & 1
    \end{pNiceMatrix},
  \]
  is of full rank. However, for the original infinite system, the controllability subspace $\mathrm{Co}(\hat{A},\hat{B})$ is spanned by $\{\sharp^{k}(\hat{B}): k\in\mathbb{N}\}$, where $\sharp$ denotes the forward shift operator. One can check easily that $(1,\tfrac{1}{2},\tfrac{1}{4},\frac{1}{8},\ldots)'$ in $\ell^2$ is orthogonal to $\mathrm{Co}(\hat{A},\hat{B})$. As a result, the infinite\hyp{}dimensional system is not approximately controllable.
\end{example}

\subsection{Control Design through Truncated Moment Systems}
\label{subsec:design}

In \Cref{subsec:truncation}, we defined the finite truncations for the infinite-dimensional linear system in \cref{eq:time-inv.inf.linear.system} and proved that the truncated system converges to the infinite\hyp{}dimensional system as the truncation order $N\to\infty$. This result suggests that the control design of the infinite\hyp{}dimensional system can be carried out on the finite\hyp{}dimensional truncated system if the order is high enough. More specifically, if we denote the initial and target moment states of \cref{eq:time-inv.inf.linear.system} as $m(0)$ and $m_F$, respectively, then for the $N$\textsuperscript{th}-order truncated linear system in \cref{eq:truncated.moment.dynamics} we can apply the classical optimal control design techniques to find a controller $\bar{u}_{N}(t)$ which steers $\bar{m}$ from $P_N m(0)$ at $t=0$ to $P_N m_F$ at $t=T$. Now if we apply the same controller $\bar{u}_{N}(t)$ to the original moment system~\cref{eq:time-inv.inf.linear.system}, then at time $t=T$ we have the moment state $m(T)$ as $m(T)=e^{T\hat{A}}m(0)+\int_{0}^{T} e^{(T-\tau)\hat{A}}\hat{B}\bar{u}_{N}(\tau)\,\mathrm{d}\tau$. By \cref{thm:truncated.systems.approx}, $\iota\bar{m}(T)\to m(T)$ as $N$ increases. Since $\bar{m}(T)=P_N m_F$ also converges to $m_F$ in $\ell^2$ as $N\to\infty$, we conclude that for a sufficiently large truncation order $N$, the controller $\bar{u}_{N}(t)$ will steer the moment state $m(t)$ into a small neighborhood of the target state $m_F$. Therefore, by the duality between the original ensemble and the associated moment system presented in \cref{thm:controllability.equiv.polynomial.case}, we propose a new control design algorithm for linear ensemble systems in \cref{alg:a.priori}, which provides a systematic ensemble control design scheme using truncated moment systems for linear ensemble systems with polynomial coefficients in $\beta$.

\begin{algorithm}[htbp]
  \caption{(A priori) Algorithm for ensemble control design through finite truncations}
  \label{alg:a.priori}
  \begin{algorithmic}[1]
    \Require Initial and target profiles $x_{0}(\cdot)$ and $x_{F}(\cdot)$ in $L^2(K,\mathbb{R}^n)$, $A(\cdot)\in L^\infty(K,\mathbb{R}^{n\times n})$, $B(\cdot)\in L^{2}(K,\mathbb{R}^{n\times m})$, time interval $[0,T]$, error threshold $\epsilon$.
    \Function{ensemble control design}{$x_{0}(\cdot), x_{F}(\cdot)$}
    \State Set a truncation order $N>0$;
    \State $E\gets \epsilon+1$; \Comment{Initialize the error.}
    \While{$E>\epsilon$}
    \State Compute $\hat{A}_{N}$, $\hat{B}_{N}$, $\bar{m}(0)=P_N m(0)$, and $\bar{m}_{F}=P_N m_F$;
    \State For the $N$\textsuperscript{th}-order truncated system~\cref{eq:truncated.moment.dynamics} in $\mathbb{R}^{N}$, find a controller $\bar{u}_{N}$ which steers the state $\bar{m}(t)$ from $\bar{m}(0)$ at $t=0$ to $\bar{m}_F$ at $t=T$;
    \State Apply the controller $\bar{u}_{N}$ to the original linear ensemble in \cref{eq:linear_ensemble}, and compute the state $x(T,\cdot)$;
    \State $E\gets \norm{x(T,\cdot)-x_{F}(\cdot)}_{L^2}$; \Comment{Update the error.}
    \State $N\gets N+1$;
    \EndWhile
    \State \textbf{return} $\bar{u}_{N}$;
    \EndFunction
  \end{algorithmic}
\end{algorithm}

In the following, we present several numerical experiments to demonstrate the applicability and effectiveness of \cref{alg:a.priori}. For all the examples in this section, we design $\bar{u}_{N}(t)$ as the minimal-energy controller for each truncated system. Any other types of controllers for finite-dimensional linear systems will also work.

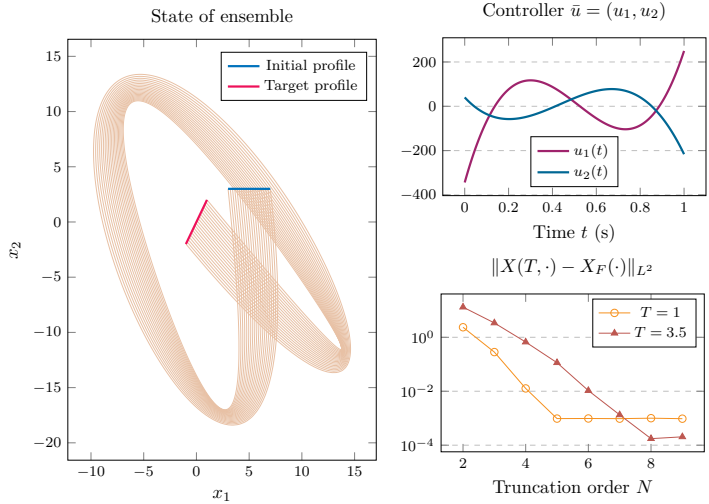
\begin{figure}[htbp]
  \centering
  \begin{subfigure}[b]{0.5\textwidth}
    \raggedleft
    \begin{tikzpicture}[scale=0.7]
      \begin{axis}[
        width=0.9\textwidth,
        height=80mm,
        scale only axis,
        legend style={legend pos=north east, font=\footnotesize},
        grid style=dashed,
        title={State of ensemble},
        xlabel={$x_1$},
        ylabel={$x_2$},
        ylabel near ticks,
        tick label style={font=\footnotesize}
        ]

        \foreach[count=\i, evaluate=\i as \y using int(2*\i-1)]
          \x in {0,2,...,38}{
        \addplot[smooth, color=Tan!60, forget plot]%
          table[x index=\x, y index=\y] {Oscillator_Ensemble/ensemble.dat};
        }
        
        \addplot [
          mark=none,
          very thick,
          color=RoyalBlue,
          domain=3:7,
        ] {3};
          \addlegendentry{Initial profile}

        \addplot [
          mark=none,
          very thick,
          color=OrangeRed,
          domain=-1:1,
        ] {2*x};
          \addlegendentry{Target profile}
      \end{axis}
    \end{tikzpicture}
  \end{subfigure}
  \ %
  \begin{subfigure}[b]{0.48\textwidth}
    \raggedright
    \begin{tikzpicture}[baseline,scale=0.7]
      \begin{axis}[
        width=0.8\textwidth,
        height=30mm,
        scale only axis,
        legend style={at={(axis cs:0.5,-250)}, anchor=center, font=\footnotesize},
        ymajorgrids=true,
        grid style=dashed,
        title={Controller $\bar{u}=(u_1,u_2)$},
        xlabel={Time $t$ (s)},
        tick label style={font=\footnotesize},
        legend entries = {$u_1(t)$, $u_2(t)$}
        ]

        \addplot[smooth, style=very thick, RedViolet]%
          table[x=time, y=u_1] {Oscillator_Ensemble/controller_u.dat};
        \addplot[smooth, style=very thick, MidnightBlue]%
          table[x=time, y=u_2] {Oscillator_Ensemble/controller_u.dat};
      \end{axis}
    \end{tikzpicture}
    \smallskip
    \begin{tikzpicture}[baseline,scale=0.7]
      \begin{semilogyaxis}[
        width=0.8\textwidth,
        height=30mm,
        scale only axis,
        restrict x to domain=2:9,
        legend style={legend pos=north east, font=\footnotesize},
        ymajorgrids=true,
        grid style=dashed,
        title={$\norm{X(T,\cdot)-X_{F}(\cdot)}_{L^2}$},
        xlabel={Truncation order $N$},
        tick label style={font=\footnotesize},
        legend entries={$T=1$, $T=3.5$}
        ]

        \addplot[mark=o, draw=BurntOrange]%
          table[x=N, y=T_1] {Oscillator_Ensemble/error_decay.dat};
        \addplot[mark=triangle*, Maroon!80]%
          table[x=N, y=T_3.5] {Oscillator_Ensemble/error_decay.dat};
      \end{semilogyaxis}
    \end{tikzpicture}
  \end{subfigure}

  \caption{Simulation results for the ensemble of harmonic oscillators in \cref{ex:oscillator.ensemble_1}. The figure on the left shows the trajectories of the ensemble from $t=0$ to $t=1$, the upper right plot is the controller obtained using \cref{alg:a.priori} at $N=5$, and the bottom right plot shows the error~$E$ at each truncation order $N$ for time periods $t\in[0,1]$ and $[0,3.5]$.}
  \label{fig:oscillator.ensemble_1}
\end{figure}
  
\begin{example}[Ensemble of Harmonic Oscillators]
  \label{ex:oscillator.ensemble_1}
  In this example we apply \cref{alg:a.priori} to manipulate an ensemble of harmonic oscillators in $\mathbb{R}^2$, given by $\tfrac{\mathrm{d}}{\mathrm{d}t}X(t,\beta)=\beta AX(t,\beta)+BU(t)$, where $A=\begin{psmallmatrix} 0 & -1 \\ 1 & 0 \end{psmallmatrix}$, $B=I$, and $\beta\in[-1,1]$, from the initial profile $X_{0}(\beta)=(5-2\beta,3)'$ to the target profile $X_{F}(\beta)=(\beta,2\beta)'$ at $T=1$. It is straightforward to see from its associated moment system in \cref{eq:moment.dynamics.prototype.in.example} that each truncated system is controllable, which ensures a minimal-energy controller $\bar{u}_{N}$ in each iteration. \cref{fig:oscillator.ensemble_1} shows sample trajectories of the ensemble, the controller $\bar{u}=(u_1,u_2)$ at $N=5$, and the decay of the error $E=\norm{X(T,\cdot)-X_{F}(\cdot)}_{L^2}$. Observe that in the case of $T=1$, $E$ stops decreasing after $N=5$ as the truncation order goes higher. This is triggered by the ill-conditioning of the matrix in higher order truncated moment systems, which can be resolved if we extend the time period (e.g., $T=3.5$).
\end{example}

\begin{example}[Pattern Formation of Harmonic Oscillators]\label{eg:pattern_design}
  Pattern design is another common yet challenging problem in ensemble control. In this example, we consider pattern formation of the same oscillator ensemble presented in \cref{ex:oscillator.ensemble_1} from a circle-shaped to a square-shaped (dotted gray in \cref{fig:pattern_design}) configuration. The resulting pattern following the ensemble control $(u(t),v(t))$ of duration $T=17$ designed using the truncated moment dynamics of dimension 34 is shown in \cref{fig:pattern_design}. The initial circle is parameterized as $(\cos(\pi\beta),\sin(\pi\beta))$, and the target pattern is expressed as $(4\beta+3,-1)$, $(1,4\beta+1)$, $(-4\beta+1,1)$ and $(-1,-4\beta+3)$ for $\beta\in [-1,-0.5]$, $[-0.5,0]$, $[0,0.5]$ and $[0.5,1]$ respectively, and their corresponding moment terms are derived by computing the integral in \cref{eq:Legendre_moments}, and are displayed in \cref{fig:pattern_design}. 
  The advantage of the proposed method is pronounced when compared with the sampling-based method commonly used for ensemble control problems \cite{chen2014sampling,schonlein2021computation,zlotnik2014optimal}. For the latter method, its performance is usually compromised by the large deviations from the target states for the systems that do not contribute to the control design process \cite{zlotnik2014optimal}. Furthermore, the method in the aforementioned literatures requires way more sampled populations to increase its precision, resulting in larger problem size for computation. In contrast, our sampling-free method guarantees the performance across the subsystems indexed by different parameter $\beta\in [-1,1]$ in the ensemble as indicated by the washed orange color in \cref{fig:pattern_design}.
\end{example}

\begin{figure}[htbp]
  \centering
    \begin{tikzpicture}[scale=0.8]
    \begin{groupplot}[
      group style={
        group size=2 by 1,
        horizontal sep=2mm
    },
      scale only axis,
      height=5cm,
    ]
    \nextgroupplot[width=5cm,title={Pattern formation},
      xlabel={$x_1$},
      ylabel={$x_2$}]
    \addplot graphics [
      xmin=-1.05631,xmax=1.01173,
      ymin=-1.09642,ymax=1.03287,
    ] {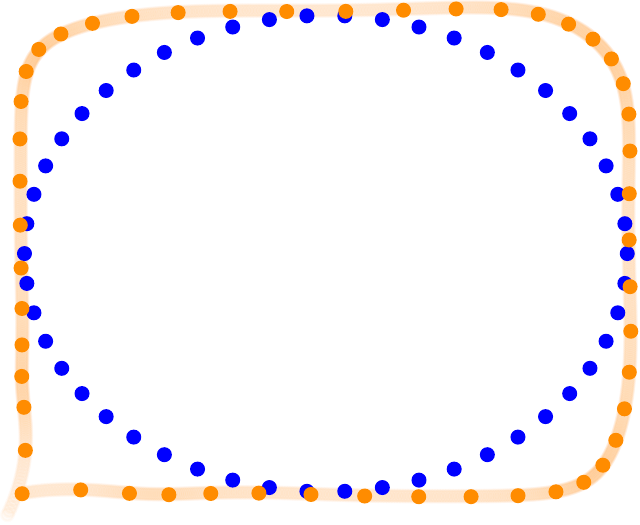};
    \draw[gray,very thick,dashed] (-1,-1) -- (1,-1) -- (1,1) -- (-1,1) -- cycle;

    \nextgroupplot[width=1.5cm,xlabel=Else,
      legend style={at={(0.5,1.02)},anchor=south, font=\footnotesize},
      ymajorgrids=true,
      grid style=dashed,
      ylabel={Coordinates},
      xlabel={Moments},
      ytick = {1,10,20,30,34},
      ylabel near ticks, 
      yticklabel pos=right,
      xtick = {-1,0,0.5},
      tick label style={font=\footnotesize}]
    \addplot[smooth, style=very thick, blue]%
      table[x=X0, y=NN, col sep=comma] {Pattern_Design/Moments.dat};
      \addlegendentry{$\bar{a}_0$}

    \addplot[smooth, style=very thick, myOrange]%
      table[x=Xf, y=NN, col sep=comma] {Pattern_Design/Moments.dat};
      \addlegendentry{$\bar{a}_f$}
    \end{groupplot}
    \end{tikzpicture}
    \medskip
    \begin{tikzpicture}[baseline,scale=0.8]
    \begin{groupplot}[
      group style={
        group size=2 by 1,
        y descriptions at=edge left,
        horizontal sep=2mm
    },
      scale only axis,
      height=1cm,
    ]
    \nextgroupplot[width=3.25cm,ymajorgrids=true,
                grid style=dashed,
                title={Controller $u_1$},
                xlabel={Time $t$ (s)},
                tick label style={font=\footnotesize}]
    \addplot[smooth, style=very thick, blue]%
          table[x=time_downsampling, y=u1, col sep=comma] {Pattern_Design/control_downsample.dat};
    
    \nextgroupplot[width=3.25cm,ymajorgrids=true,
                grid style=dashed,
                title={Controller $u_2$},
                xlabel={Time $t$ (s)},
                tick label style={font=\footnotesize}]
    \addplot[smooth, style=very thick, myOrange]%
          table[x=time_downsampling, y=u2, col sep=comma] {Pattern_Design/control_downsample.dat};
    \end{groupplot}
    \end{tikzpicture}
  \caption{Simulation results of pattern design in \cref{eg:pattern_design}. The dotted blue circle represents the initial profile, and the orange square is the attained final profile. Truncated order $N=17$ is used in this example.}
  \label{fig:pattern_design}    
\end{figure}

\section{Truncation Error Analysis}
\label{sec:error.analysis.and.sample-free}

In this section we address two issues in implementing \cref{alg:a.priori}, which are closely related to a common problem in ensemble control. First, we notice that in order to compute the norm $\norm{x(T,\cdot)-x_{F}(\cdot)}_{L^2}$, in line~8, one needs to know the value of $x(T,\beta)$ at many sample points for $\beta\in[-1,1]$. Suppose we take, say, 500 sample points of $x(T,\beta)$ uniformly on $\beta\in[-1,1]$, then at each iteration, line~7 needs be executed 500 times for every subsystem. In reality, the size of samples could be much larger, which would severely weaken the efficiency and applicability of the algorithm.

The second issue, which also causes the first one, is considerably more critical, and is ubiquitous in the realm of ensemble control. In general, to examine controllability of an ensemble system, we need to compare the ensemble $x(T,\cdot)$ at time $T$ with the target profile $x_F(\cdot)$, and evaluate the distance
\begin{equation}
  \label{eq:final.and.target.states}
  \norm{x(T,\cdot) - x_F(\cdot)},
\end{equation}
where the norm $\norm{\cdot}$ could be the sup-norm, the $L^p$-norm, etc., depending on the type of controllability in question. In existing literature, the distance~\cref{eq:final.and.target.states} is usually calculated by sampling, e.g., \cite{chen2014sampling,Zeng2017sampled,Zlotnik2012optimal,li2011optimal}. However, it is generally infeasible to numerically compute \cref{eq:final.and.target.states} through discretization with a \emph{quantifiable error}. For the difficulty with the sup\hyp{}norm we refer the readers to \cite{Miao2021numerical}, which discusses the numerical verification of $L^\infty$\hyp{}denseness for ensembles in the Banach space of continuous functions. In our case of $L^2$\hyp{}norms, \cref{eq:final.and.target.states} becomes a numerical integral whose error usually depends on the high order derivatives of the integral function; whereas $x(t,\beta)$ is only assumed to be square\hyp{}integrable in $\beta$. Such an obstacle to obtain an accurate error estimation is in particular deteriorated for studying ensemble systems, because in theory the distance \cref{eq:final.and.target.states} can be made arbitrarily small if the ensemble is controllable. In other words, a small $E$ in line~8 of \cref{alg:a.priori} is no guarantee for reaching the target profile, regardless of the sample size.

\subsection{Estimation of Truncation Error Bounds using Banded Structure}

To avoid the aforementioned issues, instead of computing the distance $\norm{x(T,\cdot)-x_{F}(\cdot)}_{L^2}$ directly, we derive its upper bound which we can evaluate rigorously using moments in $\ell^2$. Henceforth we assume $\hat{A}$ to be a \emph{$b$-banded Hermitian} matrix. Given the initial and target states $m(0)$ and $m_F$, respectively, and suppose we have an admissible controller $\bar{u}(t)$ that steers the $N$\textsuperscript{th}-order truncated moment system from $\bar{m}(0)$ to $\bar{m}(T)=\bar{m}_{F}$, where
\begin{equation}
  \label{eq:control.for.truncated.system}
  \bar{m}(T)=e^{T\hat{A}_N}\bar{m}(0)+\int_{0}^{T} e^{(T-\tau)\hat{A}_{N}} \hat{B}_N\bar{u}(\tau)\,\mathrm{d}\tau.
\end{equation}
Apply the same controller $\bar{u}(t)$ to the original moment system in \cref{eq:time-inv.inf.linear.system}, we get
\begin{equation}
  \label{eq:original.system.using.truncated.control}
  m(T)=e^{T\hat{A}} m(0)+\int_{0}^{T} e^{(T-\tau)\hat{A}} \hat{B}\bar{u}(\tau)\,\mathrm{d}\tau.
\end{equation}
Since $\hat{B}$ has finitely many non-zero entry blocks, $\hat{B}=\iota \hat{B}_N$ for large $N$. In the sequel, by abuse of notation, we will omit the inclusion~$\iota$ when comparing the objects in $\mathbb{R}^{N}$ and $\ell^2$, and let $\norm{\cdot}$ denote the $\ell^2$\hyp{}norm unless otherwise specified. So at time $T$ we have the error term $E=\norm{m(T)-m_F}$ as
\[
  E=\norm{m(T)-m_F} \leqslant \norm{m(T)-\bar{m}_F}+\norm{\bar{m}_{F}-m_{F}}.
\]
By \cref{eq:control.for.truncated.system} and \cref{eq:original.system.using.truncated.control}, we have the upper bound for the first term $\norm{m(T)-\bar{m}_F}$ as
\begingroup
\allowdisplaybreaks
\begin{align}
  \norm{m(T)-\bar{m}_F}&\leqslant\norm{e^{T\hat{A}_N}\bar{m}(0)-e^{T\hat{A}}m(0)} +\norm{\int_{0}^T \Bigl(e^{(T-\tau)\hat{A}}-e^{(T-\tau)\hat{A}_N}\Bigr)\hat{B}\bar{u}(\tau)\,\mathrm{d}\tau} \nonumber \\
  &\leqslant \norm{(e^{T\hat{A}}-e^{T\hat{A}_N})\bar{m}(0)}+\norm{e^{T\hat{A}}(\bar{m}(0)-m(0))} \nonumber\\
  &\qquad\qquad\qquad\qquad\qquad\qquad+\norm{\int_{0}^T \Bigl(e^{(T-\tau)\hat{A}}-e^{(T-\tau)\hat{A}_N}\Bigr)\hat{B}\bar{u}(\tau)\,\mathrm{d}\tau}. \label{eq:a.priori.error.upper.bound}
\end{align}
\endgroup

Observe that for the second term in \cref{eq:a.priori.error.upper.bound}, if $m(0)$ has a closed\hyp{}form that is given, then $\bar{m}(0)-m(0)$ is the remainder of the Fourier\hyp{}Legendre expansion which we can assess accurately. So
\[
  \norm{e^{T\hat{A}}(\bar{m}(0)-m(0))}\leqslant e^{T\norm{\hat{A}}}\cdot \norm{\bar{m}(0)-m(0)}.
\]
Lastly, to determine the first and the third terms in \cref{eq:a.priori.error.upper.bound}, it suffices to compare the corresponding entries in $e^{t\hat{A}}$ and $e^{t\hat{A}_{N}}$. The entrywise estimation of $(e^{t\hat{A}}-e^{t\hat{A}_N})$ is made possible by the band structure of $\hat{A}$. Readers can find a concise summary of \emph{infinite banded matrix} in \cref{appendix:infinite.banded.matrix}. By \cref{lem:bounded.banded.matrix,eq:banded.matrix.norm}, the operator $\norm{\hat{A}}$ is bounded, and we denote its upper bound by $\Delta$. Recall that $\hat{A}$ is Hermitian, and since the spectral radius $|\sigma(\hat{A})|\leqslant \norm{\hat{A}}$, its spectrum $\sigma(\hat{A})$ is contained in $[-\Delta,\Delta]$. Given a truncation order $N$ and the controller $\bar{u}(t)$ in \cref{eq:control.for.truncated.system}, if we partition $\hat{A}$ as
\(\begin{pNiceMatrix}[small]
  \hat{A}_{N} & \hat{A}_{12} \\
  \hat{A}_{21} & \hat{A}_{22}
\end{pNiceMatrix}\) and let
\(\tilde{A}=\begin{pNiceMatrix}[small]
  \hat{A}_{N} & 0 \\
  0 & \hat{A}_{22}
\end{pNiceMatrix}\), then $e^{\hat{A}_{N}}\bar{\xi}=e^{\tilde{A}}\bar{\xi}$ for any $\bar{\xi}\in\mathbb{R}^{N}\subset\ell^2$. To compare the entrywise difference between $e^{t\hat{A}}$ and $e^{t\hat{A}_{N}}$, we partition $e^{t\hat{A}}$ into the following block form
\(e^{t\hat{A}}=\begin{pNiceMatrix}[small]
  S_{11}(t) & S_{12}(t) \\
  S_{21}(t) & S_{22}(t)
\end{pNiceMatrix}\),
where $S_{11}(t)\in\mathbb{R}^{N\times N}$. So for $\bar{\xi}=(\xi_{0},\xi_{1},\ldots,\xi_{N-1})\in\mathbb{R}^{N}\subset\ell^2$,
\begin{equation}
  \label{eq:exp.diff.on.xi.bar}
  (e^{t\hat{A}}-e^{t\hat{A}_{N}})\bar{\xi} =(e^{t\hat{A}}-e^{t\tilde{A}})\bar{\xi}=\begin{pNiceMatrix} [S_{11}(t)-e^{t\hat{A}_{N}}]\bar{\xi} \\ S_{21}(t)\bar{\xi} \end{pNiceMatrix},
\end{equation}
where the lower half in \cref{eq:exp.diff.on.xi.bar} is
\[
  S_{21}(t)\bar{\xi}=\begin{pNiceMatrix}
    \xi_{0}[e^{t\hat{A}}]_{N+1,1}+\ldots+\xi_{N-1}[e^{t\hat{A}}]_{N+1,N} \\
    \xi_{0}[e^{t\hat{A}}]_{N+2,1}+\ldots+\xi_{N-1}[e^{t\hat{A}}]_{N+2,N} \\
    \vdots
  \end{pNiceMatrix}.
\]
By the exponential decay property in \cref{eq:exponential.decay.for.exp.A.hat}, for any $\chi>1$, the $ij$\textsuperscript{th} element $\big|[e^{t\hat{A}}]_{ij}\big|\leqslant K(t)\rho^{|i-j|}$, where $\rho=\chi^{-1}$ and $K(t)=\tfrac{2\chi}{\chi-1}\exp\bigl[\frac{t\Delta(\chi+\chi^{-1})}{2}\bigr]$. Hence, we have
\begin{multline*}
  \big|\xi_{0}[e^{t\hat{A}}]_{N+1,1}+\ldots+\xi_{N-1}[e^{t\hat{A}}]_{N+1,N}\big| \leqslant K(t)(|\xi_{0}|\rho^{N}+|\xi_{1}|\rho^{N-1}+\ldots+|\xi_{N-1}|\rho),
\end{multline*}
and if we set
\begin{equation}
  L_{N}(\bar{\xi})\doteq{}\sum_{k=0}^{N-1} |\xi_{k}|\rho^{N-k},
\end{equation}
then
\begin{align}
  \norm{S_{21}(t)\bar{\xi}} &\leqslant K(t)L_{N}(\bar{\xi})\norm{(1, \rho{}, \rho^{2}, \ldots)'} =\frac{K(t)L_{N}(\bar{\xi})}{\sqrt{1-\rho^2}}. \label{eq:lower.half}
\end{align}
For the upper half in \cref{eq:exp.diff.on.xi.bar}, we note that $[S_{11}(t)-e^{t\hat{A}_{N}}]_{ij}=[e^{t\hat{A}}-e^{t\tilde{A}}]_{ij}$ for any $i,j\in[1,N]$. Therefore, using \cref{eq:exp.perturbation,eq:entrywise.estimation}, we conclude that, for any $i,j\in[1:N]$,
\[
  \big|[e^{t\hat{A}}-e^{t\tilde{A}}]_{ij}\big|\leqslant \bar{K}(t)\rho^{|N-i|+|N-j|-b/2},
\]
where
\[
  \bar{K}(t)=b(b+2)t\norm{\hat{A}}_{\mathrm{max}}\Bigl(\frac{\chi}{\chi-1}\Bigr)^2\exp\Bigl[\frac{t\Delta(\chi+\chi^{-1})}{2}\Bigr],
\]
and $\norm{\hat{A}}_{\mathrm{max}}$ denotes the entrywise maximum of $\hat{A}$. If we define a positive matrix $Q(t)\in\mathbb{R}^{N\times N}$ such that
\[
  [Q(t)]_{ij}=\bar{K}(t)\rho^{|N-i|+|N-j|-b/2},
\]
then
\begin{equation}
  \label{eq:upper.half}
  \norm{(S_{11}(t)-e^{t\hat{A}_N})\bar{\xi}}\leqslant \norm{Q(t)|\bar{\xi}|},
\end{equation}
where $|\bar{\xi}|=(|\xi_{0}|, |\xi_{1}|, \ldots, |\xi_{N-1}|)'$. Combining \cref{eq:lower.half,eq:upper.half}, we obtain the upper bound for \cref{eq:exp.diff.on.xi.bar} as
\begin{equation}
  \label{eq:exp.diff.bound}
  \norm{(e^{t\hat{A}}-e^{t\hat{A}_{N}})\bar{\xi}}\leqslant W(t,\bar{\xi}),
\end{equation}
where
\[
  W^2(t,\bar{\xi})=\norm{Q(t)|\bar{\xi}|}^2+\frac{K^2(t)L_{N}^2(\bar{\xi})}{1-\rho^2}.
\]

Since we have obtained the upper bound for the difference between the exponential function of the infinite banded matrix $\hat{A}$ and its finite truncation $\hat{A}_{N}$, by \cref{eq:exp.diff.bound}, we are now able to estimate the distance between the moment state $m(T)$ and the target $m_{F}$, i.e., $\norm{m(T)-m_{F}}\leqslant E_{N}(T,\bar{u})$, where the truncation error bound is given by
\begin{equation}
  \label{eq:feasible.error.upper.bound}
  \begin{aligned}
    E_{N}(T,\bar{u})\doteq W(&T,\bar{m}(0))+e^{T\norm{\hat{A}}_2}\norm{\bar{m}(0)-m(0)} \\
      &+\int_{0}^{T} W(T-\tau,\hat{B}\bar{u}(\tau))\,\mathrm{d}\tau +\norm{\bar{m}_{F}-m_{F}}.
  \end{aligned}
\end{equation}
From the previous analysis, we know that the error term in \cref{eq:feasible.error.upper.bound} is determined by the bandwidth $b$, the spectrum radius $\Delta$, the norm of $\hat{A}$, the controller $\bar{u}(t)$, the time $T$, and the parameter $\rho$. Note that $\rho$ is a free parameter that can be optimized to achieve a tighter upper bound. In summary, we have the following theorem of error estimation.

\begin{theorem}
  \label{thm:error.uppper.bound}
  Consider a LTI Legendre-moment system of the form \cref{eq:time-inv.inf.linear.system}, where $\hat{A}$ is a Hermitian banded matrix with uniformly bounded entries and $\hat{B}$ is a finite rank operator in the form of \cref{eq:polynomials.in.beta.B.hat}. Given the initial and target states $m(0)$ and $m_F$, respectively, in $\ell^2$, and a truncation order $N$, let $\bar{u}(t)\in L^{\infty}([0,T],\mathbb{R}^m)$ be an admissible controller which steers the $N$\textsuperscript{th}-order truncated moment system~\cref{eq:truncated.moment.dynamics} from $\bar{m}(0)$ to $\bar{m}(T)$ in $\mathbb{R}^{N}$, then the controller $\bar{u}(t)$, when applied to the original Legendre-moment system, would steer the Legendre-moment $m(t)$ from $m(0)$ to $m(T)$ in $\ell^2$ at time $T$, for which we have the following error estimation,
  \[
    \norm{m(T) - m_{F}}_{\ell^2}\leqslant E_{N}(T, \bar{u}),
  \]
  where the upper bound $E_{N}$ is defined in \cref{eq:feasible.error.upper.bound}.
\end{theorem}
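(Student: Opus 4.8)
The plan is to assemble the bound from the three error sources identified in the discussion preceding the statement, each of which is controlled by the banded and Hermitian structure of $\hat{A}$; the proof is therefore mostly careful bookkeeping once the key entrywise decay estimates are in place. First I would apply the variation-of-constants formulas \cref{eq:control.for.truncated.system} and \cref{eq:original.system.using.truncated.control} to the truncated and infinite moment systems under the common controller $\bar{u}$, and then the triangle inequality as in \cref{eq:a.priori.error.upper.bound} to split $\norm{m(T)-m_F}$ into: (a) a free-evolution discrepancy $\norm{(e^{T\hat{A}}-e^{T\hat{A}_N})\bar{m}(0)}$; (b) an initial-data truncation term $\norm{e^{T\hat{A}}(\bar{m}(0)-m(0))}$; (c) a forced-response discrepancy $\int_0^T\norm{(e^{(T-\tau)\hat{A}}-e^{(T-\tau)\hat{A}_N})\hat{B}\bar{u}(\tau)}\,\mathrm{d}\tau$; and (d) the target-truncation residual $\norm{\bar{m}_F-m_F}$. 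Term (b) is immediate from $\norm{e^{T\hat{A}}}\leqslant e^{T\norm{\hat{A}}}$, and term (d) is carried along unchanged.

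The technical core is the uniform estimate \cref{eq:exp.diff.bound} for $\norm{(e^{t\hat{A}}-e^{t\hat{A}_N})\bar{\xi}}$ with $\bar{\xi}\in\mathbb{R}^N\subset\ell^2$, which supplies both (a) with $\bar{\xi}=\bar{m}(0)$ and (c) with $\bar{\xi}=\hat{B}\bar{u}(\tau)$ (using $\hat{B}=\iota\hat{B}_N$ for $N$ large). To establish it I would first use that $\hat{A}$ is Hermitian and $b$-banded with uniformly bounded entries, so \cref{lem:bounded.banded.matrix} together with \cref{eq:banded.matrix.norm} gives $\norm{\hat{A}}\leqslant\Delta$ and hence $\sigma(\hat{A})\subset[-\Delta,\Delta]$; this activates the off-diagonal decay bound \cref{eq:exponential.decay.for.exp.A.hat}, namely $\bigl|[e^{t\hat{A}}]_{ij}\bigr|\leqslant K(t)\rho^{|i-j|}$ for any $\rho=\chi^{-1}<1$. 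Writing $e^{t\hat{A}}$ in $N$-block form and noting $e^{t\hat{A}_N}\bar{\xi}=e^{t\tilde{A}}\bar{\xi}$ on $\mathbb{R}^N$, the difference $(e^{t\hat{A}}-e^{t\tilde{A}})\bar{\xi}$ decomposes as in \cref{eq:exp.diff.on.xi.bar} into a lower block $S_{21}(t)\bar{\xi}$ and an upper block $[S_{11}(t)-e^{t\hat{A}_N}]\bar{\xi}$. The raw decay bound directly gives \cref{eq:lower.half} for the lower block; for the upper block I would invoke the perturbation identity \cref{eq:exp.perturbation} and the entrywise estimate \cref{eq:entrywise.estimation} to obtain $\bigl|[e^{t\hat{A}}-e^{t\tilde{A}}]_{ij}\bigr|\leqslant\bar{K}(t)\rho^{|N-i|+|N-j|-b/2}$ on $[1:N]\times[1:N]$, and hence \cref{eq:upper.half} after introducing the positive matrix $Q(t)$. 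Adding the two contributions in quadrature produces $W(t,\bar{\xi})$.

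The final step is to substitute \cref{eq:exp.diff.bound} into (a) and (c), the bound for (b), and the residual (d) back into \cref{eq:a.priori.error.upper.bound}, which yields $\norm{m(T)-m_F}\leqslant E_N(T,\bar{u})$ with $E_N$ exactly as in \cref{eq:feasible.error.upper.bound}; interchanging the norm and the integral in (c) is justified because $\bar{u}\in L^\infty([0,T],\mathbb{R}^m)$ makes the integrand bounded on $[0,T]$. The only genuinely nontrivial ingredient is the off-diagonal exponential decay \cref{eq:exponential.decay.for.exp.A.hat} and its perturbed refinement \cref{eq:exp.perturbation}--\cref{eq:entrywise.estimation}: these come from a Bernstein/Chebyshev polynomial-approximation argument applied to $z\mapsto e^{tz}$ on a neighborhood of $[-\Delta,\Delta]$, exploiting that the $k$-th power of a $b$-banded matrix is $(bk)$-banded, and are proved in \cref{appendix:infinite.banded.matrix}; everything else is routine triangle-inequality estimation.
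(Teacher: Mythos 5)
Your proposal is correct and follows essentially the same route as the paper: the same triangle-inequality decomposition into the four terms of \cref{eq:a.priori.error.upper.bound} plus the target residual, the same block splitting of $e^{t\hat{A}}-e^{t\hat{A}_N}$ into the $S_{21}$ and $S_{11}-e^{t\hat{A}_N}$ pieces, and the same reliance on the exponential decay bound \cref{eq:exponential.decay.for.exp.A.hat} together with the perturbation estimate \cref{eq:exp.perturbation}--\cref{eq:entrywise.estimation} to produce $W(t,\bar\xi)$ and hence $E_N(T,\bar u)$. The paper presents this derivation in the text preceding the theorem rather than as a separate proof, but the content matches yours step for step.
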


Using the error analysis above, we propose the following \emph{sampling-free} ensemble control design protocol described in \cref{alg:sampling-free} for Legendre-moment systems, which are equipped with a Hermitian system matrix. This algorithm substitutes the error term $\norm{x(T,\cdot)-x_{F}(\cdot)}_{L^2}$ in \cref{alg:a.priori} by the upper bound in \cref{eq:feasible.error.upper.bound}. It is worth noting that the evaluation of $E_N$ does not require sampling the ensemble system, which guarantees the uniform performance of the designed ensemble control input over all the individual systems in the ensemble.

\begin{algorithm}[htbp]
  \caption{Sampling-free ensemble control design using moment systems}
  \label{alg:sampling-free}
  \begin{algorithmic}[1]
    \Require Initial profile $x_{0}(\cdot)$ and target profile $x_{F}(\cdot)$ in $L^2(K,\mathbb{R}^n)$, $A(\cdot)\in L^\infty(K,\mathbb{R}^{n\times n})$, $B(\cdot)\in L^{2}(K,\mathbb{R}^{n\times m})$, time interval $[0,T]$, error threshold $\epsilon$.
    \Function{ensemble control design}{$x_{0}(\cdot), x_{F}(\cdot)$}
    \State Set a truncation order $N>0$;
    \State $E\gets \epsilon+1$; 
    \While{$E>\epsilon$}
    \State Compute $\hat{A}_{N}$, $\hat{B}_{N}$, $\bar{m}(0)=P_N m(0)$, and $\bar{m}_{F}=P_N m_F$;
    \State For the $N$\textsuperscript{th}-order truncated system~\cref{eq:truncated.moment.dynamics} in $\mathbb{R}^{N}$, find a controller $\bar{u}_{N}$ which steers the state $\bar{m}(t)$ from $\bar{m}(0)$ at $t=0$ to $\bar{m}_F$ at $t=T$;
    \State Apply the controller $\bar{u}_{N}$ to the original linear ensemble in \cref{eq:linear_ensemble}, and compute the state $x(T,\cdot)$;
      \State $E\gets E_{N}(T,\bar{u}_{N})$; \Comment{Error Upper Bound}
      \State $N\gets N+1$;
    \EndWhile
    \State \textbf{return} $\bar{u}_{N}$;
    \EndFunction
  \end{algorithmic}
\end{algorithm}

\subsection{Simulation Results}
Here, we apply both \cref{alg:a.priori,alg:sampling-free} to the controllable scalar system in \cref{ex:beta.controllable} to illustrate ensemble control design through the Legendre-moment system.

\begin{example}
  \label{ex:scalar.case.control.design}
  Since the banded matrix $\hat{A}$ in \cref{ex:beta.controllable} has a simple structure with bandwidth $2$, we have
  \[
    \big|[e^{t\hat{A}}-e^{t\tilde{A}}]_{ij}\big|\leqslant \bar{K}(t)\rho^{2N-i-j+1},
  \]
  where
  \[
    \bar{K}(t)=2t\norm{\hat{A}}_{\mathrm{max}}\Bigl(\frac{2\chi}{\chi-1}\Bigr)^2\exp\Bigl[\frac{t\Delta(\chi+\chi^{-1})}{2}\Bigr].
  \]
  So we can further improve the positive matrix $Q$ in the upper bound~\cref{eq:exp.diff.bound} to be
  \[
    [Q(t)]_{ij}=\bar{K}(t)\rho^{2N-i-j+1}.
  \]
\end{example}

We compare the performance of both algorithms in \cref{fig:example_sin2cos}, where the initial and target profiles are set to be $x(0,\beta)=\sin(0.5\pi\beta)$ and $x_{F}(\beta)=\cos(0.5\pi\beta)$, respectively. For both algorithms, we use minimal energy control for the finite\hyp{}dimensional truncated systems; and for \cref{alg:sampling-free}, we set $\rho\approx 0.0478$ that minimizes the upper bound~\cref{eq:exp.diff.bound}. Below we highlight some takeaways from the simulation results.

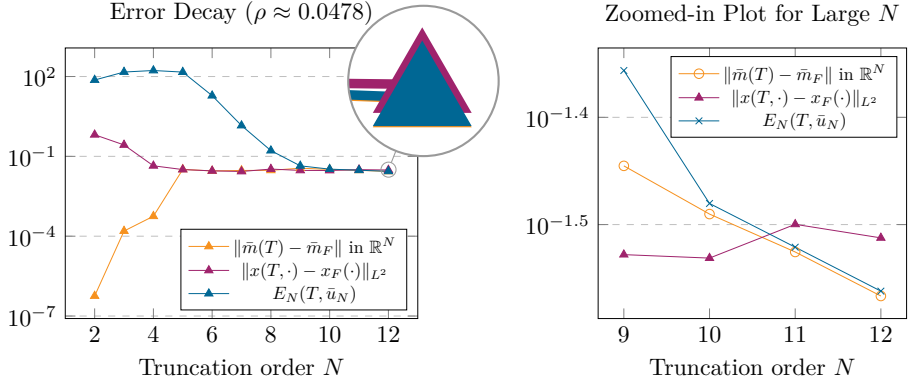
\begin{figure}[htbp]
 \centering
  \begin{tikzpicture}[scale=0.9, spy using outlines={circle, magnification=10, connect spies}]
    \begin{semilogyaxis}[
      width=0.4\textwidth,
      height=40mm,
      scale only axis,
      restrict x to domain=2:12,
      legend style={nodes={scale=0.75, transform shape}, legend pos=south east},
      ymajorgrids=true,
      grid style=dashed,
      title={Error Decay ($\rho\approx 0.0478$)},
      xlabel={Truncation order $N$},
      legend entries = {$\norm{\bar{m}(T)-\bar{m}_{F}}$ in $\mathbb{R}^{N}$, $\norm{x(T,\cdot)-x_{F}(\cdot)}_{L^2}$, $E_{N}{(T,\bar{u}_{N})}$}
      ]

      \addplot[mark=triangle*, BurntOrange]%
        table[x=N, y=finite] {Scalar_sin2cos/Scalar_sin2cos.dat};
      \addplot[mark=triangle*, RedViolet]%
        table[x=N, y=approx] {Scalar_sin2cos/Scalar_sin2cos.dat};
      \addplot[mark=triangle*, MidnightBlue]%
        table[x=N, y=upper_bound] {Scalar_sin2cos/Scalar_sin2cos.dat};

      \coordinate (spypoint) at (axis cs:10.68,0.005);
      \coordinate (magnifyglass) at (axis cs:11.75,5);
    \end{semilogyaxis}

    \spy [black!40, size=2cm] on (spypoint)
      in node[fill=white] at (magnifyglass);
  \end{tikzpicture}
  \ %
  \begin{tikzpicture}[scale=0.9]
    \begin{semilogyaxis}[
      width=0.35\textwidth,
      height=40mm,
      scale only axis,
      restrict x to domain=9:12,
      xtick distance=1,
      title={Zoomed-in Plot for Large $N$},
      legend style={nodes={scale=0.75, transform shape}, legend pos=north east},
      ymajorgrids=true,
      grid style=dashed,
      xlabel={Truncation order $N$},
      legend entries = {$\norm{\bar{m}(T)-\bar{m}_{F}}$ in $\mathbb{R}^{N}$,
      $\norm{x(T,\cdot)-x_{F}(\cdot)}_{L^2}$, $E_{N}{(T,\bar{u}_{N})}$}
      ]

      \addplot[mark=o, draw=BurntOrange]%
        table[x=N, y=finite] {Scalar_sin2cos/Scalar_sin2cos.dat};
      \addplot[mark=triangle*, RedViolet]%
        table[x=N, y=approx] {Scalar_sin2cos/Scalar_sin2cos.dat};
      \addplot[mark=x, MidnightBlue]%
        table[x=N, y=upper_bound] {Scalar_sin2cos/Scalar_sin2cos.dat};
    \end{semilogyaxis}
  \end{tikzpicture}
  \caption{Error decay in the simulation for the scalar linear ensemble in \cref{ex:scalar.case.control.design}, where $x(0,\beta)=\sin(0.5\pi\beta)$ and $x_{F}(\beta)=\cos(0.5\pi\beta)$. The right-side figure is a zoomed-in plot of the left-side figure for $N\geqslant 9$.}
  \label{fig:example_sin2cos}
\end{figure}

\begin{enumerate}[label=(\alph*)]
  \item From the top plot we note that,  as $N$ increases, both the $L^2$\hyp{}distance and its upper bound $E_{N}(T,\bar{u}_{N})$ converge to the error tolerance~$\norm{\bar{m}(T)-\bar{m}_{F}}$ for the truncated moment systems in $\mathbb{R}^{N}$. Namely, the accuracy of both control design methods for the linear ensemble depends on, and is as good as, the accuracy of the method we apply to the finite\hyp{}dimensional truncated moment systems.
  \item As shown in the zoomed-in plot, the computed $L^2$\hyp{}distance is even \emph{larger} than its upper bound~$E_{N}(T,\bar{u}_{N})$ at $N=11$ and $12$. Such discrepancy is the result of sampling for the numerical integral, which, as we mentioned at the beginning of this section, lacks a quantifiable error that we can estimate. In consequence, the $L^2$\hyp{}distance obtained as a numerical integral is not always a reliable indicator of the degree of ensemble controllability. In contrast, the upper bound $E_{N}$ of the $L^2$\hyp{}distance, which is explicitly computable, guarantees that the linear ensemble is steered to within an $\epsilon$\hyp{}neighborhood of the target profile $x_{F}(\cdot)$ using the controller $\bar{u}_{N}$ when $E_{N}(T,\bar{u}_{N})<\epsilon$.
  \item Since \cref{alg:sampling-free} needs \emph{no} sampling, and the controller $\bar{u}_{N}$ is not applied to the subsystems in the linear ensemble at each iteration, it is computationally more efficient than \cref{alg:a.priori}.
\end{enumerate}

\section{Conclusion}
\label{sec:conclusion}

In conclusion, this paper establishes a robust theoretical framework for analyzing and designing control strategies for linear ensemble systems using dynamic Legendre moments. The introduced moment-based approach reveals a duality between the linear ensemble and its moment system, establishing a necessary and sufficient Kalman-type denseness condition for $L^2$-ensemble controllability. This equivalence enables a practical control design scheme through finite truncations in the moment space, leveraging the banded matrix structur of moment dynamics. The demonstrated convergence of truncated moment systems to the Legendre-moment system allows for quantifying error bounds in control design, addressing issues of ill-posedness and inaccuracy associated with conventional sampling processes. Beyond its theoretical contributions, the dynamic Legendre moment method provides a sampling-free, numerically verifiable, and computationally efficient procedure for ensemble control design, offering a valuable tool for precise and reliable control in diverse applications.

\appendix

\section{Shift Operators} \label{appendix:shift.opt}
 
  Given a sequence $\alpha=(\alpha_0, \alpha_1, \ldots)' \in \ell^2(\mathbb{R}^{n})$, where each $\alpha_i$ is a \emph{row} vector of dimension $n$, the forward and the backward shift operator for $\ell^2(\mathbb{R}^{n})$, denoted by $\sharp$ and $\flat$, respectively, are defined as 
  \begin{align*}
    \sharp(\alpha) &= (\underbrace{0, \ldots, 0}_\text{$n$ zeros}, \alpha_0, \alpha_1, \ldots)', \\
    \flat(\alpha) &= (\alpha_1, \alpha_2, \ldots)'.
  \end{align*}

  As we mentioned in \cref{rem:long.remark}~(iii), it is equivalent to shifting forwards or backwards $n$ times using the common shift operators for scalar sequences.

\section{Infinite Banded Matrix} \label{appendix:infinite.banded.matrix}

This appendix provides a self-contained overview of the mathematical theory on (infinite) banded matrix that is essential to the development of the infinite moment system in the paper. The properties of finite and infinite banded matrices and their exponential functions have been extensively studied, see e.g.\ \cite{Benzi2014decay,benzi1999bounds,benzi2007decay,Grimm2012resolvent,Hochbruck2010exponential,shao2014finite}. To be consistent with the previous sections, all infinite matrices in this appendix are one-sided, i.e., their row and column numbers range from $1, 2, \ldots$ to infinity, and the Hilbert space $\ell^2$ consists of one-sided infinite vectors $\xi=[\xi_i], i\in \mathbb{N}_{+}$. For a formal mathematical formulation, let us now consider an infinite matrix $\hat{A}=[a_{ij}]$, $a_{ij}\in\mathbb{R}$ or $\mathbb{C}$ with $i,j\in \mathbb{N}_{+}$.

\begin{definition}
  \label{def:banded.matrix.and.bandwidth}
  The infinite matrix $\hat{A}$ is called $b$-\emph{banded} (\emph{banded} for short) for some even number $b>0$ if $a_{ij}=0$ for all $|i-j|>b/2$. $b$ is called the \emph{bandwidth} of $\hat{A}$.
\end{definition}

It is well-known that a banded matrix is bounded as an operator on $\ell^2$ if all entries in the banded $\hat{A}$ are bounded.

\begin{lemma}
  \label{lem:bounded.banded.matrix}
  $\hat{A}$ is a bounded operator on $\ell^2$ if all its entries $a_{ij}$ are uniformly bounded.
\end{lemma}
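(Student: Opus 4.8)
The plan is to write $\hat A$ as a finite sum of ``single-diagonal'' operators, each of which is transparently bounded on $\ell^2$, and then add up the bounds. First I would split $\hat A$ along its diagonals: for each integer $k$ with $|k|\leqslant b/2$, let $D_k$ be the infinite matrix whose $(i,j)$-entry is $a_{ij}$ if $j=i+k$ and $0$ otherwise. By \cref{def:banded.matrix.and.bandwidth} we then have $\hat A=\sum_{k=-b/2}^{b/2}D_k$, a sum of exactly $b+1$ terms. Note that bandedness also makes every coordinate of the formal product $\hat A\xi$ a \emph{finite} sum, so $\hat A\xi$ is a well-defined sequence for each $\xi\in\ell^2$; the content of the lemma is that this sequence lies in $\ell^2$ and depends boundedly on $\xi$.

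Second, I would bound each $D_k$. On $\xi=(\xi_1,\xi_2,\ldots)'$ it acts by $(D_k\xi)_i=a_{i,i+k}\,\xi_{i+k}$, with a coordinate set to $0$ whenever the index $i+k$ falls out of range. Thus $D_k$ is the composition of a shift of $\ell^2$ (a forward or backward shift iterated $|k|$ times, as in \cref{appendix:shift.opt}, according to the sign of $k$) with the multiplication operator by the bounded sequence $\bigl(a_{i,i+k}\bigr)_i$. Shifts are contractions and the multiplication operator has norm $\sup_i|a_{i,i+k}|\leqslant M$, where $M$ is the uniform bound on the entries of $\hat A$; hence each $D_k$ is bounded with $\norm{D_k}\leqslant M$.

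Finally, the triangle inequality yields $\norm{\hat A\xi}\leqslant\sum_{k=-b/2}^{b/2}\norm{D_k\xi}\leqslant (b+1)M\,\norm{\xi}$ for every $\xi\in\ell^2$, so $\hat A\in\mathcal B(\ell^2)$ with $\norm{\hat A}\leqslant (b+1)M$ (this is precisely the bound invoked later in \cref{thm:truncated.systems.approx}). I do not expect a genuine obstacle here: the only mild subtlety is checking that $\hat A$ really defines an operator rather than just a formal array, which the bandedness remark settles. As an alternative to the diagonal splitting one could instead invoke Schur's test, using that each row and each column of $\hat A$ has at most $b+1$ nonzero entries of modulus $\leqslant M$; this gives the same estimate $\norm{\hat A}\leqslant (b+1)M$.
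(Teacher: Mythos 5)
Your proof is correct and is essentially the paper's own argument: the paper likewise bounds $\norm{\hat A\xi}$ by $M$ times the sum of the norms of the $b+1$ shifted copies $\flat^{b/2}(\xi),\ldots,\xi,\ldots,\sharp^{b/2}(\xi)$, which is exactly your diagonal decomposition plus the triangle inequality, yielding the same bound $M(b+1)\norm{\xi}$.
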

\begin{proof}
  Let us say that $|a_{ij}|$ are bounded by some $M>0$, then for any vector $\xi=[\xi_i]\in\ell^2, i\in \mathbb{N}_{+}$, it is evident that
  \begin{multline}
    \label{eq:banded.matrix.norm}
    \norm{\hat{A}\xi}\leqslant M\bigl(\norm{\flat^{\frac{b}{2}}(\xi)}+\norm{\flat^{\frac{b}{2}-1}(\xi)}+\ldots+\norm{\xi}+\norm{\sharp(\xi)} \\
    +\ldots+\norm{\sharp^{\frac{b}{2}}(\xi)}\bigr)\leqslant M(b+1)\norm{\xi},
  \end{multline}
  where $\sharp$ and $\flat$ denote the forward and backward shift operators, respectively.
\end{proof}

Henceforth we shall assume that the infinite banded matrix $\hat{A}$ is bounded. For the remainder of this appendix, we compare the entries in the exponential function of a banded matrix $\hat{A}$ and its finite truncations. Our development modifies and closely follows \cite{shao2014finite}. First, we introduce the concept of exponential decay for infinite matrices.

\begin{definition}
  \label{def:exp.decay}
  An infinite matrix $[a_{ij}]$ has the \emph{exponential decay property} if there exist $K>0$ and $\rho\in(0,1)$ such that
    \begin{equation}
      \label{eq:exp.decay.definition}
      |a_{ij}|\leqslant K\rho^{-|i-j|},
    \end{equation}
  for all $i,j$. The constant $\rho$ is called the \emph{decay rate}.
\end{definition}

The next lemma shows an analytic function of a banded matrix enjoys the exponential decay property.

\begin{lemma}[See~\cite{benzi1999bounds,shao2014finite}]
  \label{lem:benzi-golub}
  Let $D$ be a $b$-banded Hermitian matrix whose spectrum $\sigma(D)$ is contained in $[-1,1]$. Suppose a function $F$ is analytic inside the Bernstein Ellipse $\mathcal{E}_{\chi}$ ($\chi>1$) in $\mathbb{C}$, defined as
  \begin{equation}
    \label{eq:bernstein.ellipse}
    \frac{\mathrm{Re}(z)^2}{(\chi+\chi^{-1})^2} + \frac{\mathrm{Im}(z)^2}{(\chi-\chi^{-1})^2} = \frac{1}{4},
  \end{equation}
  and is continuous on $\mathcal{E}_{\chi}$, and let $M(\chi)\doteq{}\max_{z\in\mathcal{E}_{\chi}}|F(z)|$, then there exists constants $K>0$ and $\rho=\chi^{-2/b}$ such that the entries in the infinite matrix $F(D)$ are of exponential decay:
  \[
    \bigl|[F(D)]_{ij}\bigr|\leqslant K\rho^{|i-j|},
  \]
  where the constant $K=\max\bigl\{\tfrac{2\chi M(\chi)}{\chi -1}, \norm{F(D)}_{2}\bigr\}$.
\end{lemma}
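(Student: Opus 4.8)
The plan is to approximate $F(D)$ by polynomials in $D$ and to exploit two structural facts: a polynomial in the $b$-banded matrix $D$ is again banded, so its entries vanish far from the diagonal, while the Hermitian functional calculus turns the \emph{operator}-norm approximation error into the uniform error of a polynomial approximation of $F$ on $[-1,1]$. The geometric decay then comes from the classical Bernstein estimate, which quantifies how well a function analytic inside $\mathcal{E}_\chi$ is approximated by polynomials on $[-1,1]$.

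Concretely, I would first record that if $p$ has degree $k$, then $p(D)=\sum_{\ell=0}^{k}c_\ell D^\ell$ is a bounded operator on $\ell^2$ and is $kb$-banded: the product of a $b_1$-banded and a $b_2$-banded matrix is $(b_1+b_2)$-banded, so by induction $D^\ell$ is $\ell b$-banded, whence $[p(D)]_{ij}=0$ for $|i-j|>kb/2$. Next, since $D$ is bounded and self-adjoint with $\sigma(D)\subseteq[-1,1]$, the continuous functional calculus is isometric, so $\norm{F(D)-p(D)}_2=\sup_{x\in\sigma(D)}|F(x)-p(x)|\leqslant\sup_{x\in[-1,1]}|F(x)-p(x)|$. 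Combining the two, for any indices with $|i-j|>kb/2$ and any $p$ of degree $k$,
\[
  \bigl|[F(D)]_{ij}\bigr|=\bigl|[F(D)-p(D)]_{ij}\bigr|\leqslant\norm{F(D)-p(D)}_2\leqslant\sup_{x\in[-1,1]}|F(x)-p(x)|,
\]
and taking the infimum over such $p$ bounds the left-hand side by the best degree-$k$ approximation error $E_k(F)$.

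It remains to estimate $E_k(F)$ and to choose $k$ well. Expanding $F$ in Chebyshev polynomials and bounding the coefficients via a contour integral pushed out to $\mathcal{E}_\chi$ gives $|a_\ell|\leqslant 2M(\chi)\chi^{-\ell}$, hence $E_k(F)\leqslant\sum_{\ell>k}|a_\ell|\leqslant\frac{2M(\chi)}{\chi-1}\chi^{-k}$. Given $i\neq j$, I would take $k$ to be the largest nonnegative integer with $kb/2<|i-j|$; then $k>\tfrac{2|i-j|}{b}-1$, so with $\rho\doteq\chi^{-2/b}\in(0,1)$ one gets $\chi^{-k}<\chi\,\rho^{|i-j|}$ and therefore $\bigl|[F(D)]_{ij}\bigr|<\frac{2\chi M(\chi)}{\chi-1}\rho^{|i-j|}$. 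For $i=j$ the crude bound $\bigl|[F(D)]_{ii}\bigr|\leqslant\norm{F(D)}_2=\norm{F(D)}_2\,\rho^{0}$ does the job, so setting $K=\max\bigl\{\tfrac{2\chi M(\chi)}{\chi-1},\,\norm{F(D)}_2\bigr\}$ yields $\bigl|[F(D)]_{ij}\bigr|\leqslant K\rho^{|i-j|}$ for all $i,j$.

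The arithmetic with bandwidths, the isometry of the Hermitian functional calculus, and the exponent bookkeeping in the last step are all routine. The one genuinely substantive ingredient is the Bernstein-type estimate $E_k(F)\leqslant\frac{2M(\chi)}{\chi-1}\chi^{-k}$ — equivalently the Chebyshev-coefficient bound $|a_\ell|\leqslant 2M(\chi)\chi^{-\ell}$ — which rests on deforming the integral representation of $a_\ell$ out to the Bernstein ellipse using analyticity of $F$ in its interior; in a self-contained account this is the main work, otherwise it is quoted from \cite{benzi1999bounds,shao2014finite}. A minor subtlety is the near-diagonal regime where $|i-j|$ is too small to admit a useful truncation degree $k\geqslant 0$ (in particular $i=j$), which is harmlessly absorbed into $K$ via the trivial operator-norm bound.
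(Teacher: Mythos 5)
Your argument is correct and is essentially the standard Benzi--Golub proof that the paper itself only quotes from \cite{benzi1999bounds,shao2014finite}: approximate $F(D)$ by Chebyshev polynomial truncations, use that a degree-$k$ polynomial in a $b$-banded matrix is $kb$-banded together with the isometry of the Hermitian functional calculus, and invoke the Bernstein-ellipse coefficient bound to get the geometric rate $\rho=\chi^{-2/b}$, with the diagonal and near-diagonal entries absorbed into $K$ via $\norm{F(D)}_2$. The only nitpick is that your ``$k>\tfrac{2|i-j|}{b}-1$'' should be ``$k\geqslant\tfrac{2|i-j|}{b}-1$'' when $2|i-j|/b$ is an integer, which still yields the claimed non-strict bound.
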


Given a $b$-banded \emph{Hermitian} matrix $\hat{A}$ with $\sigma(\hat{A})\subset [\lambda_{0}-\Delta, \lambda_{0}+\Delta]$, and let $F(z)$ be the entire function $\exp(\Delta z)$ so that \cref{lem:benzi-golub} holds for all $\chi>1$, then for $D=(\hat{A}-\lambda_{0})/\Delta$ with $\sigma(D)\subset [-1,1]$, $F(D)=\exp(\hat{A}-\lambda_{0}I)$. Since $\hat{A}$ commutes with $I$, $\exp(\hat{A}-\lambda_{0}I)=\exp(-\lambda_{0})\exp(A)$. So if we set
\[
  M(\chi)=\max_{z\in\mathcal{E}_{\chi}}|\exp(\Delta z)|=\exp\Bigl[\frac{\Delta(\chi+\chi^{-1})}{2}\Bigr],
\]
then by \cref{lem:benzi-golub}, for any $\chi>1$,
\[
  \bigl|[F(D)]_{ij}\bigr|=\exp(-\lambda_{0})\bigl|[\exp(\hat{A})]_{ij}\bigr|\leqslant K\rho^{|i-j|},
\]
where $\rho=\chi^{-2/b}$ and $K=\max\bigl\{\tfrac{2\chi M(\chi)}{\chi-1}, \norm{F(D)}_{2}\bigr\}$. To determine the value of $K$, first we note that since $F(D)$ is Hermitian, its operator $2$-norm coincides with its spectral radius, then by the spectral mapping theorem,
\[
  \norm{F(D)}_{2}=\max_{x\in \sigma(D)}|F(x)|\leqslant \max_{x\in[-1,1]}|\exp(\Delta x)|=\exp(\Delta).
\]
On the other hand, for any $\chi>1$,
\[
  \frac{2\chi M(\chi)}{\chi-1}=\frac{2\chi}{\chi-1}\exp\Bigl[\frac{\Delta(\chi+\chi^{-1})}{2}\Bigr]>\exp(\Delta).
\]
In conclusion, we have the following exponential decay property for $\exp(\hat{A})$:
\begin{equation}
  \label{eq:exponential.decay.for.exp.A.hat}
  \big|[\exp(\hat{A})]_{ij}\big|\leqslant \exp(\lambda_{0})K\rho^{|i-j|}
\end{equation}
where
$K=\frac{2\chi}{\chi-1}\exp\Bigl[\frac{\Delta(\chi+\chi^{-1})}{2}\Bigr]$ for any $\rho=\chi^{-2/b}\in(0,1)$.

The exponential decay property in \cref{eq:exponential.decay.for.exp.A.hat} allows us to compare $\exp(\hat{A})$ to $\exp(\hat{A}_{N})$. Suppose $\hat{A}$ is partitioned into the block form
\(\hat{A}=
  \begin{pNiceMatrix}[small]
    \hat{A}_{11} & \hat{A}_{12} \\
    \hat{A}_{21} & \hat{A}_{22}
  \end{pNiceMatrix}
\), where $\hat{A}_{11}$ is a finite $N$-by-$N$ matrix, and let $\tilde{A}=\mathrm{diag}\{\hat{A}_{11}, \hat{A}_{22}\}$ and $R=\hat{A}-\tilde{A}=\begin{pNiceMatrix}[small] 0 & \hat{A}_{12} \\ \hat{A}_{21} & 0 \end{pNiceMatrix}$. From the perturbation theory of semigroups, by \cite[Theorem~5.3.1]{Curtain2020} we have that, for any $\varepsilon_{j}$ in the canonical basis of $\ell^2$,
\begin{align}
  [\exp( \hat{A})-\exp(\tilde{A})]\varepsilon_{j} 
    = \int_{0}^{1} \exp[(1-s)\tilde{A}] R \exp(s\hat{A})\varepsilon_{j} \,\mathrm{d}s. \label{eq:exp.perturbation}
\end{align}
Now for each $s\in [0,1]$, denote $U(s)=\exp[(1-s)\tilde{A}]$ and $V(s)=\exp(s\hat{A})$. Due to the banded structure of $R$, we conclude that (assuming that $N>b/2$)
\begingroup
\allowdisplaybreaks
\begin{align}
  \big|[U(s)RV(s)]_{ij}\big| 
    &= \bigg|\sum_{k=-\frac{b}{2}+1}^{0} \sum_{l=1\vphantom{\frac{b}{2}}}^{k+\frac{b}{2}}+\sum_{k=1\vphantom{\frac{b}{2}}}^{\frac{b}{2}} \sum_{l=k-\frac{b}{2}}^{0} U_{i,N+l}R_{N+l,N+k}V_{N+k,j}\bigg| \nonumber \\
    &\leqslant M_{12} \biggl(\sum_{k=-\frac{b}{2}+1}^{0} \sum_{l=1\vphantom{\frac{b}{2}}}^{k+\frac{b}{2}}+\sum_{k=1\vphantom{\frac{b}{2}}}^{\frac{b}{2}} \sum_{l=k-\frac{b}{2}}^{0} |U_{i,N+l}V_{N+k,j}| \biggr) \label{eq:entrywise.estimation},
\end{align}
\endgroup
where $M_{12}$ is the entrywise maximum of $\hat{A}_{12}$ (and $\hat{A}_{21}$). To apply the exponential decay property~\cref{eq:exponential.decay.for.exp.A.hat} to $U(s)$ and $V(s)$, we note that the spectrum $\sigma(\tilde{A})\subseteq [\lambda_{0}-\Delta, \lambda_{0}+\Delta]$, because
\[
  \begin{aligned}
    |\sigma(\tilde{A}-\lambda_{0}I)|&=\norm{\tilde{A}-\lambda_{0}I}_{2} =\max\{\norm{\hat{A}_{11}-\lambda_{0}I}_{2},\ \norm{\hat{A}_{22}-\lambda_{0}I}_{2}\} \\
    &\leqslant \norm{\hat{A}-\lambda_{0}I}_{2}=|\sigma(\hat{A}-\lambda_{0}I)|,
  \end{aligned}
\]
where $|\sigma(\cdot)|$ denotes the spectral radius. Therefore, by \cref{eq:exponential.decay.for.exp.A.hat},
\[
  \begin{aligned}
    |U_{i,N+l}| &\leqslant\frac{2\chi\exp[(1-s)\lambda_{0}]}{\chi-1}\exp\Bigl[\frac{(1-s)\Delta(\chi+\chi^{-1})}{2}\Bigr] \quad \cdot \rho^{|N+l-i|}, \\
    |V_{N+k,j}| &\leqslant\frac{2\chi\exp(s\lambda_{0})}{\chi-1}\exp\Bigl[\frac{s\Delta(\chi+\chi^{-1})}{2}\Bigr]\rho^{|N+k-j|}.
  \end{aligned}
\]
and so we have 
    $ |U_{i,N+l}V_{N+k,j}|\leqslant K'\rho^{|N+l-i|+|N+k-j|}\leqslant K'\rho^{|N-i|+|N-j|-|l-k|}\leqslant K'\rho^{|N-i|+|N-j|-\frac{b}{2}}$,
where $K'=\tfrac{4\chi^2\exp(\lambda_{0})}{(\chi-1)^2}\exp\bigl[\tfrac{\Delta(\chi+\chi^{-1})}{2}\bigr]$. Hence we have
\[
  \big|[U(s)RV(s)]_{ij}\big|\leqslant \frac{b(b+2)}{4}M_{12}K'\rho^{|N-i|+|N-j|-\frac{b}{2}}.
\]
Integrating on $s\in[0,1]$, by \cref{eq:exp.perturbation} we conclude the entrywise truncation error of the exponential function of the Hermitian matrix $\hat{A}$ as
\begin{equation}
  \label{eq:exp.entrywise.truncation.bound}
  \big|[\exp(\hat{A})-\exp(\tilde{A})]_{ij}\big|\leqslant K_{0}\rho^{|N-i|+|N-j|-\frac{b}{2}},
\end{equation}
where $K_{0}=b(b+2)M_{12}\tfrac{\chi^2\exp(\lambda_{0})}{(\chi-1)^2}\exp[\tfrac{\Delta(\chi+\chi^{-1})}{2}]$.

\begin{remark}
  Since our results above hold for all $\rho=\chi^{-2/b}\in (0,1)$, it is sometimes desirable to find a $\chi$ that minimizes $K\rho^{|i-j|}$. Because knowing the minimal upper bound is not necessary in this paper, we will not discuss the optimization of $\chi$, and refer interested readers to \cite{shao2014finite}.
\end{remark}

\section{Semigroup of Bounded Linear Operators} \label{appendix:semigroup}

In this sections, we review the basic concepts and results of semigroup of bounded linear operators, and introduce the Trotter-Kato theorem for the approximation of semigroups, which plays a vital role in the truncation of the moment system in \Cref{subsec:truncation}. A good reference for the material in this section of the appendix is \cite{pazy2012semigroups}.

\begin{definition}
  \label{def:semigroup}
  Let $X$ be a Banach space. A one-parameter family $T(t)$, $0\leqslant t<\infty$, of bounded linear operators in $\mathcal{B}(X)$ is a \emph{semigroup} of bounded linear operators on $X$ if $T(0)=I$ and $T(t+s)=T(t)T(s)$, for any $0\leqslant t, s<\infty$. Furthermore, the semigroup $T(t)$ is called 
  \begin{enumerate}[label=(\alph*)]
    \item \emph{uniformly continuous} if $\lim_{t\to 0^{+}}\norm{T(t)-I}=0$;
    \item \emph{strongly continuous} if $\lim_{t\to 0^{+}}T(t)x=x$ for all $x\in X$, and is denoted as   a $C_0$ semigroup.
  \end{enumerate}

  Given a semigroup $T(t)$ on $X$, its \emph{infinitesimal generator} $A$ is an operator (possibly unbounded) on $X$ given by
  \begin{equation}
    \label{eq:infinitesimal.generator}
    Ax\doteq\lim_{t\to 0^{+}}\frac{T(t)x - x}{t}, \quad x\in D(A),
  \end{equation}
  where the domain $D(A)$ consists of all $x\in X$ such that the limit in \cref{eq:infinitesimal.generator} exists.
\end{definition}

It is straightforward that a uniformly continuous semigroup $T(t)$ is strongly continuous (i.e., of type $C_0$). The following theorem shows every $C_0$ semigroup is exponentially bounded in norm.

\begin{theorem}
  \label{thm:semigroup.norm}
  Let $T(t)$ be a $C_0$ semigroup. There exist constants $\omega\geqslant 0$ and $M\geqslant 1$ such that $\norm{T(t)}\leqslant Me^{\omega t}$ for $0\leqslant t<\infty$. If $A$ is the infinitesimal generator of $T(t)$, then it is a \emph{closed} linear operator and its domain $D(A)$ is \emph{dense} in $X$.

  Further, $T(t)$ is uniformly continuous if and only if its infinitesimal generator $A$ is bounded. Consequently, we have $T(t)=e^{tA}$.
\end{theorem}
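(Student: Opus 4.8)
The plan is to assemble the classical chain of arguments for $C_0$ semigroups (following \cite{pazy2012semigroups}). First I would establish local boundedness of $\norm{T(t)}$ near $t=0$: if this failed, there would exist $t_n\to 0^+$ with $\norm{T(t_n)}\to\infty$, and the uniform boundedness principle would supply an $x\in X$ with $\sup_n\norm{T(t_n)x}=\infty$, contradicting $T(t_n)x\to x$. Hence $\norm{T(t)}\leqslant M$ on some $[0,\eta]$ with $M\geqslant 1$; writing $t=k\eta+r$ with $0\leqslant r<\eta$ and using $T(t)=T(\eta)^kT(r)$ yields $\norm{T(t)}\leqslant Me^{\omega t}$ with $\omega=\eta^{-1}\ln M$ (and $\omega=0$ if $M=1$). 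A quick consequence is that $t\mapsto T(t)x$ is continuous on $[0,\infty)$, by splitting into right and left increments via $\norm{T(t+h)x-T(t)x}=\norm{T(t)(T(h)x-x)}$ and $\norm{T(t)x-T(t-h)x}\leqslant\norm{T(t-h)}\,\norm{T(h)x-x}$.

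Next I would prove density of $D(A)$. For $x\in X$ and $t>0$ set $x_t\doteq\tfrac{1}{t}\int_0^t T(s)x\,\mathrm{d}s$, a Bochner integral that is well defined by the continuity just established. The computation $\tfrac{1}{h}(T(h)-I)x_t=\tfrac{1}{th}\bigl(\int_t^{t+h}-\int_0^h\bigr)T(s)x\,\mathrm{d}s\to\tfrac{1}{t}(T(t)x-x)$ as $h\to 0^+$ shows $x_t\in D(A)$ with $Ax_t=\tfrac{1}{t}(T(t)x-x)$; letting $t\to 0^+$ gives $x_t\to x$, so $D(A)$ is dense. Along the way I would also record the fundamental identities for $x\in D(A)$: $T(t)x\in D(A)$, $AT(t)x=T(t)Ax$, and $t\mapsto T(t)x$ is $C^1$ with $\tfrac{\mathrm{d}}{\mathrm{d}t}T(t)x=T(t)Ax$, hence $T(t)x-x=\int_0^t T(s)Ax\,\mathrm{d}s$. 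For closedness of $A$, take $x_n\in D(A)$ with $x_n\to x$ and $Ax_n\to y$; applying this last identity and passing to the limit in $n$ (the integrand converges uniformly on $[0,t]$ since $\norm{T(s)}$ is bounded there) gives $T(t)x-x=\int_0^t T(s)y\,\mathrm{d}s$, and dividing by $t$ and letting $t\to 0^+$ yields $x\in D(A)$, $Ax=y$.

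Finally, for the equivalence and the formula $T(t)=e^{tA}$: if $A$ is bounded, then $S(t)\doteq\sum_{k\geqslant 0}t^kA^k/k!$ converges in operator norm and defines a uniformly continuous semigroup with generator $A$; a uniqueness argument — fix $x\in D(A)$, differentiate $s\mapsto T(t-s)S(s)x$ on $[0,t]$, observe the derivative vanishes, then invoke density and boundedness — shows $T(t)=S(t)=e^{tA}$, which is uniformly continuous. Conversely, if $T(t)$ is uniformly continuous then $\tfrac{1}{t}\int_0^t T(s)\,\mathrm{d}s\to I$ in operator norm, so this average is invertible for small $t$; combined with $\tfrac{1}{h}(T(h)-I)\int_0^t T(s)\,\mathrm{d}s\to T(t)-I$ in operator norm, we obtain $A=(T(t)-I)\bigl(\int_0^t T(s)\,\mathrm{d}s\bigr)^{-1}$, a bounded operator, and the previous direction then gives $T(t)=e^{tA}$.

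I expect the main obstacle to be the careful handling of the \emph{left-hand} derivative of $t\mapsto T(t)x$, i.e. showing $T(t-h)\tfrac{1}{h}(T(h)-I)x\to T(t)Ax$, and the analogous operator-norm limits in the uniformly continuous case: these are exactly the places where strong continuity alone is insufficient and the exponential bound from the first step must be used to control $\norm{T(t-h)}$ uniformly in $h$.
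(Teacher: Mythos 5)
Your proposal is correct, but note that the paper itself offers no proof of this statement: it is quoted as a classical review result in the appendix, with the reader referred to Pazy's book. What you have written is a faithful reconstruction of exactly that standard argument (uniform boundedness principle for the local bound, the semigroup law for the exponential estimate, the averaged elements $\tfrac{1}{t}\int_0^t T(s)x\,\mathrm{d}s$ for density, the identity $T(t)x-x=\int_0^t T(s)Ax\,\mathrm{d}s$ for closedness, and the invertibility of $\tfrac{1}{t}\int_0^t T(s)\,\mathrm{d}s$ for small $t$ in the uniformly continuous case), including the correct observation that the exponential bound is what controls $\norm{T(t-h)}$ in the left-derivative step; so there is nothing to compare against and no gap to report.
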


In the sequel, we will denote $A\in G(M, \omega)$ for an operator $A$ which is the infinitesimal generator of a $C_0$ semigroup $T(t)$ satisfying $\norm{T(t)}\leqslant Me^{\omega t}$. Next, we present a version of Trotter-Kato theorem for the approximation of semigroups.

\begin{theorem}[see {\cite[pp.~88, Theorem~4.5]{pazy2012semigroups}}]
  \label{thm:trotter-kato}
  Let $A_n\in G(M, \omega)$ and assume (i) as $n\to\infty$, $A_nx\to Ax$ for every $x\in D$ where $D$ is a dense subset of $X$; (ii) there exists a $\lambda_0$ with $\mathrm{Re}\,\lambda_0>\omega$ for which $(\lambda_0I-A)D$ is dense in $X$, then the closure $\bar{A}$ of $A$ is in $G(M,\omega)$. In addition, if $T_n(t)$ and $T(t)$ are the $C_0$ semigroups generated by $A_n$ and $\bar{A}$, respectively, then
  \begin{equation}
    \label{eq:Trotter-Kato}
    \lim_{n\to \infty} T_n(t)x = T(t)x,
  \end{equation}
  for all $t\geqslant 0$. The convergence in \cref{eq:Trotter-Kato} is \emph{uniform} in $t$ for bounded time intervals.
\end{theorem}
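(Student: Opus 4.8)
The plan is to follow the classical two-stage route for Trotter--Kato: first upgrade the hypotheses to strong convergence of resolvents, then transfer that convergence to the semigroups. Since every $A_n\in G(M,\omega)$, the Hille--Yosida characterization guarantees that each $\lambda$ with $\mathrm{Re}\,\lambda>\omega$ lies in the resolvent set of $A_n$ with the uniform power bounds $\norm{R(\lambda,A_n)^k}\leqslant M/(\mathrm{Re}\,\lambda-\omega)^k$; in particular $\{R(\lambda_0,A_n)\}_n$ is uniformly bounded by $M/(\mathrm{Re}\,\lambda_0-\omega)$. First I would exploit the algebraic identity, valid for $x\in D$,
\[
R(\lambda_0,A_n)(\lambda_0 I-A)x = x + R(\lambda_0,A_n)(A_n-A)x,
\]
whose second summand tends to $0$ by hypothesis~(i) together with the uniform resolvent bound. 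Thus $R(\lambda_0,A_n)$ converges on the set $(\lambda_0 I-A)D$, which is dense by hypothesis~(ii); combining denseness with uniform boundedness (a standard $\varepsilon/3$ argument) yields a bounded limit operator $R=\lim_n R(\lambda_0,A_n)$ in the strong topology on all of $X$.

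The second step is to identify $R$ as the resolvent $R(\lambda_0,\bar A)$ of the closure $\bar A$ and to certify $\bar A\in G(M,\omega)$. From the identity above, $R(\lambda_0 I-A)x=x$ for $x\in D$, so $R$ has dense range containing $D$; passing the power estimate to the strong limit gives $\norm{R^k}\leqslant M/(\mathrm{Re}\,\lambda_0-\omega)^k$. The delicate point here is injectivity of $R$, which I need in order to realize $R^{-1}$ as a closed operator $\lambda_0 I-\bar A$. I would obtain this by observing that the resolvents $R(\lambda,A_n)$ are analytic in $\lambda$ and uniformly bounded on compact subsets of the half-plane, so the strong limit persists for every $\lambda$ with $\mathrm{Re}\,\lambda>\omega$ and inherits the resolvent identity; this forces $R=R(\lambda_0,\bar A)$ to satisfy the full Hille--Yosida hypotheses, placing $\bar A$ in $G(M,\omega)$ and generating its semigroup $T(t)$.

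Finally I would transfer resolvent convergence to semigroup convergence. Here I would use the Laplace-transform bridge
\[
R(\lambda,A_n)x=\int_0^\infty e^{-\lambda t}T_n(t)x\,\mathrm{d}t, \qquad \mathrm{Re}\,\lambda>\omega,
\]
together with the exponential formula $T(t)x=\lim_{k\to\infty}(I-\tfrac{t}{k}\bar A)^{-k}x$. For each fixed $k$ the $k$-fold product of resolvents converges by the first step, and the uniform estimates $\norm{T_n(t)},\norm{T(t)}\leqslant Me^{\omega t}$ let me control the double limit in $k$ and $n$, yielding $T_n(t)x\to T(t)x$ first for $x$ in the dense domain and then for all $x\in X$ by uniform boundedness, with the $Me^{\omega t}$ bound delivering uniformity on bounded $t$-intervals. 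I expect this last passage---reconciling the limits $k\to\infty$ and $n\to\infty$ uniformly on bounded time windows---to be the main obstacle, since it is precisely the content that makes Trotter--Kato deeper than pointwise resolvent convergence; the attendant bookkeeping is standard and is carried out in full in \cite{pazy2012semigroups}, which is why the statement is cited rather than reproved here.
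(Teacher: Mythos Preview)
The paper does not prove this theorem at all; it is quoted verbatim from \cite[pp.~88, Theorem~4.5]{pazy2012semigroups} and used as a black box in the proof of \cref{thm:truncated.systems.approx}. Your sketch of the classical two-stage Trotter--Kato argument (resolvent convergence on a dense set, identification of the limit as $R(\lambda_0,\bar A)$, then transfer to semigroup convergence via the exponential formula) is a faithful outline of how Pazy proves it, and you yourself correctly note in the final sentence that the statement is cited rather than reproved here---so there is nothing to compare.
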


\bibliographystyle{siamplain}
\bibliography{legendre_moment}
\end{document}